\titleformat{\section}{\large\bfseries}{\thesection.}{1em}{}
\theoremstyle{definition}
\theoremstyle{definition}
\newtheorem{definition}{Definition}[section]
\newtheorem{theorem}{Theorem}[section]
\newtheorem{question}{Question}[section]
\newtheorem{proposition}{Proposition}[section]
\newtheorem{lemma}{Lemma}[section]
\theoremstyle{remark}
\newtheorem*{remark}{Remark}
\newcommand{\R}{\mathbb{R}}
\newcommand{\A}{\mathcal{A}}
\newcommand{\V}{\mathcal{V}}
\newcommand{\T}{\mathcal{T}}
\newcommand{\ttmin}{T_{\min}}
\newcommand{\ttmax}{T_{\max}}
\DeclareMathOperator{\conv}{conv}
\DeclareMathOperator{\vertex}{vert}
\newcommand{\sig}[1]{\widehat{\Sigma}_{#1}}
\DeclareMathOperator{\lift}{lift}
\DeclareMathOperator{\level}{level}
\DeclareMathOperator{\volu}{vol}
\newcommand{\ts}{\textsuperscript}
\newcommand{\hv}{\mathcal{H}_{\mathcal{V}}}
\newcommand{\shift}{\overset{\mathrm{shift}}{=\joinrel=}}
\newcommand{\tmax}{\mathcal{T}_{\max}}
\newcommand{\tmin}{\mathcal{T}_{\min}}
\newcommand{\gkz}{\Sigma^{\text{GKZ}}_{\mathcal{A}}}
\newcommand{\vertgkz}{\text{vert}^{\text{GKZ}}(\tau)}
\newcommand*\cube{\mbox{\mancube}}
\newcommand{\zono}{\mathcal{Z}_{\mathcal{V}}} %omg that's exactly what i was going to call it ////girl i think u mean ZOMG
\title{Higher Secondary Polytopes for Two-Dimensional Zonotopes}
\author[1]{Elisabeth Bullock \thanks{edb22@mit.edu}}
\author[2]{Katie Gravel \thanks{kgravel@mit.edu}}
\affil[1,2]{Department of Mathematics, Massachusetts Institute of Technology}
\date{}
\begin{document}

%\affil[2]{Department of Mathematics, Massachusetts Institute of Technology}
\maketitle

\begin{abstract}
\iffalse
Very recently, Galashin, Postnikov, and Williams introduced the notion of \emph{higher secondary polytopes}, generalizing the \emph{secondary polytope} of Gelfand, Kapranov, and Zelevinsky. Given an $n$-point configuration $\mathcal{A}$ in $\mathbb{R}^{d-1}$, they define a family of convex $(n-d)$-dimensional polytopes $\widehat{\Sigma}_{1}, \ldots, \widehat{\Sigma}_{n-d}$. The $1$-skeletons of this family of polytopes are the flip graphs of certain combinatorial configurations which generalize triangulations of $\text{conv} \mathcal{A}$. In our paper we restrict our attention to $d=2$. %First, we show that $\sig{1}$ is the product of simplices. 
First, we relate the $1$-skeleton of the Minkowski sum $\widehat{\Sigma}_{k} + \widehat{\Sigma}_{k-1}$ to the flip graph of ``hypertriangulations'' of the deleted $k$-sum of $\A$ when $\A$ consists of distinct points. Second, we compute the diameter of $\widehat{\Sigma}_{k}$ and $\widehat{\Sigma}_{k}+\widehat{\Sigma}_{k-1}$ for all $k$.
\fi

Very recently, Galashin, Postnikov, and Williams introduced the notion of \emph{higher secondary polytopes}, generalizing the \emph{secondary polytope} of Gelfand, Kapranov, and Zelevinsky. Given an $n$-point configuration $\mathcal{A}$ in $\mathbb{R}^{d-1}$, they define a family of convex $(n-d)$-dimensional polytopes $\widehat{\Sigma}_{1}, \ldots, \widehat{\Sigma}_{n-d}$. The $1$-skeletons of this family of polytopes are the flip graphs of certain combinatorial configurations which generalize triangulations of $\text{conv} \mathcal{A}$. In our paper we restrict our attention to $d=2$. 
First, we relate the $1$-skeleton of the Minkowski sum $\widehat{\Sigma}_{k} + \widehat{\Sigma}_{k-1}$ to the flip graph of ``hypertriangulations'' of the deleted $k$-sum of $\mathcal{A}$ when $\mathcal{A}$ consists of distinct points. Second, we compute the diameter of $\widehat{\Sigma}_{k}$ and $\widehat{\Sigma}_{k}+\widehat{\Sigma}_{k-1}$ for all $k$.
\end{abstract}

%\tableofcontents

%\newpage

\section{Introduction}

A number of interesting questions regard the set of all triangulations of a given point cloud, together with the \emph{flips} between those triangulations. If one considers all the triangulations of a convex $n$-gon, with the flip operation switching the diagonal of a quadrilateral, the corresponding \emph{flip graph} turns out to be the 1-skeleton of a convex $(n-3)$-dimensional polytope, known as the Stasheff associahedron~\cite{stasheff1963homotopy}. Computing the diameter of this graph is notoriously hard; it was done partially by Sleator–-Tarjan–-Thurston~\cite{sleator1988rotation} and finalized by Pournin~\cite{pournin2014diameter}.

A more general setting describing triangulations in any dimension is given in the work of Gelfand, Kapranov, and Zelevinsky~\cite{gelfand1994discriminants}. For a given point set, they introduce its \emph{secondary polytope} which generalizes the associahedron and reflects a good deal of combinatorial information about the triangulations.

There are a few natural ways to generalize the notion of a triangulation, starting from the following perspective: a triangulation of an $n$-point set $\A$, lying in an affine $(d-1)$-plane in $\R^d$, can be viewed as the bijective image of a $(d-1)$-dimensional subcomplex inside the standard $(n-1)$-simplex $\triangle_{1,n} \subset \R^n$, under the linear projection $\pi : \triangle_n \to \R^d$ mapping the vertices of $\triangle_n$ to the points of $\A$. If we extend the map $\pi$ to the unit cube $\cube^{\, n} = [0,1]^n$, so that $\triangle_{1,n} = \cube^{\, n} \cap \{(x_1,\ldots,x_n)~|~x_1+\ldots+x_n=1\}$, we might as well consider the (fine) zonotopal tilings of the zonotope $\pi(\cube^{\, n})$, which restrict to triangulations of $\pi(\triangle_{1,n})$. Now if we decide to look at the polytope $\pi(\triangle_{k,n})$, where $\triangle_{k,n} = \cube^{\, n} \cap \{(x_1,\ldots,x_n)~|~x_1+\ldots+x_n=k\}$ is the standard hypersimplex, there are a few remarkable ways to subdivide it.

\begin{itemize}
    \item  One can consider the bijective images of $(d-1)$-dimensional subcomplexes inside $\triangle_{k,n}$; these subdivisions are called \emph{hypertriangulations}~\cite{olarte2019hypersimplicial}.
    \item One can consider the tilings of $\pi(\triangle_{k,n})$ induced by the zonotopal tilings of $\pi(\cube^{\, n})$; these are \emph{lifting hypertriangulations};
    \item One can as well consider the lifting hypertriangulations up to certain zonotopal flips; in this paper we call those \emph{reduced lifting hypertriangulations}; they are closely related to the vertices of the \emph{higher secondary polytopes}~\cite{galashin2019higher}.
\end{itemize}

Each of the three ways have their notions of a flip, so we can consider the corresponding flip graphs. In this project, we are mainly concerned with the second and the third way of defining generalized triangulations in the case when $d=2$. The main results of this paper regard the diameter of those flip graphs. 

Galashin, Postnikov, and Williams introduced~\cite{galashin2019higher} \emph{higher secondary polytopes}, $\sig{1}, \ldots, \sig{n-d}$, a family of convex $(n-d)$-dimensional polytopes for a $n$-point configuration $\mathcal{A}$ in $\mathbb{R}^{d-1}$.  The first of these polytopes coincides with the secondary polytope of Gelfand, Kapranov, and Zelevinsky. The Minkowski sum $\sig{1}+\ldots+\sig{n-d}$ is closely related to the fiber zonotope of Billera and Sturmfels \cite{billera1992fiber}, and its 1-skeleton gives the flip graph for \emph{regular zonotopal tilings} related to $\A$. Individual higher secondary polytopes capture information about certain equivalence classes of regular zonotopal tilings and generalize the GKZ secondary polytope.

%dimension 1 and dimension 3
Higher secondary polytopes are relatively well-understood when $d=1$ or $d=3$.  In the one-dimensional case, $\sig{k}$ is the hypersimplex $\triangle_{k,n}$, while the Minkowski sum $\sig{1}+...+\sig{n-1}$ gives the well-known $(n-1)$-dimensional permutohedron.
The three-dimensional case is closely connected to \emph{plabic graphs}, certain planar bi-color graphs, introduced by Postnikov \cite{postnikov2006total} in his study of totally positive Grassmannian. For example, for a set of $n$ points in convex position in $\mathbb{R}^2$, the $1$-skeleton of the polytope $\sig{k}$ gives the flip graph for certain bipartite plabic graphs.

%diameter problem
Since the $1$-skeletons of these higher secondary polytopes (and certain Minkowski sums of higher secondary polytopes) yield flip graphs for certain combinatorial objects, it is natural to investigate their diameters  (that is to say, the diameters of their $1$-skeletons, measured as the graph edge-distance).  It is easy to compute the diameter of these polytopes when $d=1$, but the $d=3$ case has proven fairly challenging.  When $d=3$, the diameter of $\sig{1}$ is $2n-10$ for all $n>12$ \cite{sleator1988rotation}\cite{pournin2014diameter}.  When $d=3$ and $n=2k$, Farber conjectured that the diameter of $\sig{k}$ is $\frac{k}{2}(k-1)^2$. Balitskiy and Wellman \cite{balitskiy2018flips} obtained this value as a lower bound, but a non-trivial upper bound has not been achieved.

%our own thing /// o hell ya
Our research was motivated by the desire to better understand the case $d=2$, with a specific interest in the diameters of the higher secondary polytopes. %We first show that $\sig{1}$ is combinatorially equivalent to the Cartesian product of certain simplices, 
We restrict our attention to \emph{generic} point configurations (ones in which each point has multiplicity one), and compute the diameters of all higher secondary polytopes as well as the diameters of $\sig{k}+\sig{k-1}$ for all $k\in[n-2]$, which holds combinatorial significance in relation to lifting hypertriangulations.

\subsection*{Acknowledgements} This work was done as part of the Summer Program in Undergraduate Research (SPUR) at MIT. The authors thank Pavel Galashin for suggesting this project and providing them with useful advice. They are grateful to Professor Ankur Moitra and Professor David Jerison for their helpful and encouraging comments. Lastly, they are grateful for their mentor, Alexey Balitskiy, for offering many insightful ideas and for being consistently supportive throughout this entire project.

\section{Background}

\subsection{Triangulations and the Gelfand--Kapranov--Zelevinsky secondary polytope} %talk ab regularity and gkz stuff

Given a point configuration $\mathcal{A}=\{a_1,...,a_n\}$ in $\mathbb{R}^{d-1}$, we consider \emph{triangulations} of $\mathcal{A}$, or polyhedral subdivisions of $\conv(\mathcal{A})$ in which every cell is a $(d-1)$-dimensional simplex whose vertices belong to $\mathcal{A}$. We label the cells by their vertices: $\triangle_B=\conv\left(\{a_b~|~b\in B\}\right)$ where $B\in\binom{[n]}{d}$. Note that it is not required for every point in $\A$ to be a vertex of a simplex in this subdivision.

More specifically we can consider \emph{regular} triangulations of $\mathcal{A}$.
\begin{definition}
Given a height vector $h=(h_1,...,h_n) \in \R^n$, each point $a_i\in\mathcal{A}$ is lifted to a the point $\Tilde{a}_i=(a_i,h_i)$ in $\mathbb{R}^d$. An \emph{upper face} of a polytope $P$ is a face $F$ for which $x+\epsilon e_d\notin P$ for all $x\in F$ and $\epsilon>0$, where $e_i$ is the $i\ts{th}$ standard basis vector. We obtain a polyhedral subdivision of $\A$ when the upper faces of $\conv(\Tilde{a}_i,...,\Tilde{a}_n)$ are projected back down to $\mathbb{R}^{d-1}$ by forgetting the last coordinate.  The resulting subdivison is called \emph{regular}.
\end{definition}

We can also consider \emph{flips} between triangulations.  Informally,  $d+1$ points in general position in $\mathbb{R}^{d-1}$ can be triangulated in exactly two ways, and switching between them is called a flip (for a complete definition see \cite{pfeifle2003computing}). For example, when $n=5$ and $d=3$, the flip graph of triangulations is isomorphic to the cycle graph on $5$ vertices.

\begin{comment}

\begin{figure}[h]
    \centering
    \includegraphics[width=0.4\textwidth]{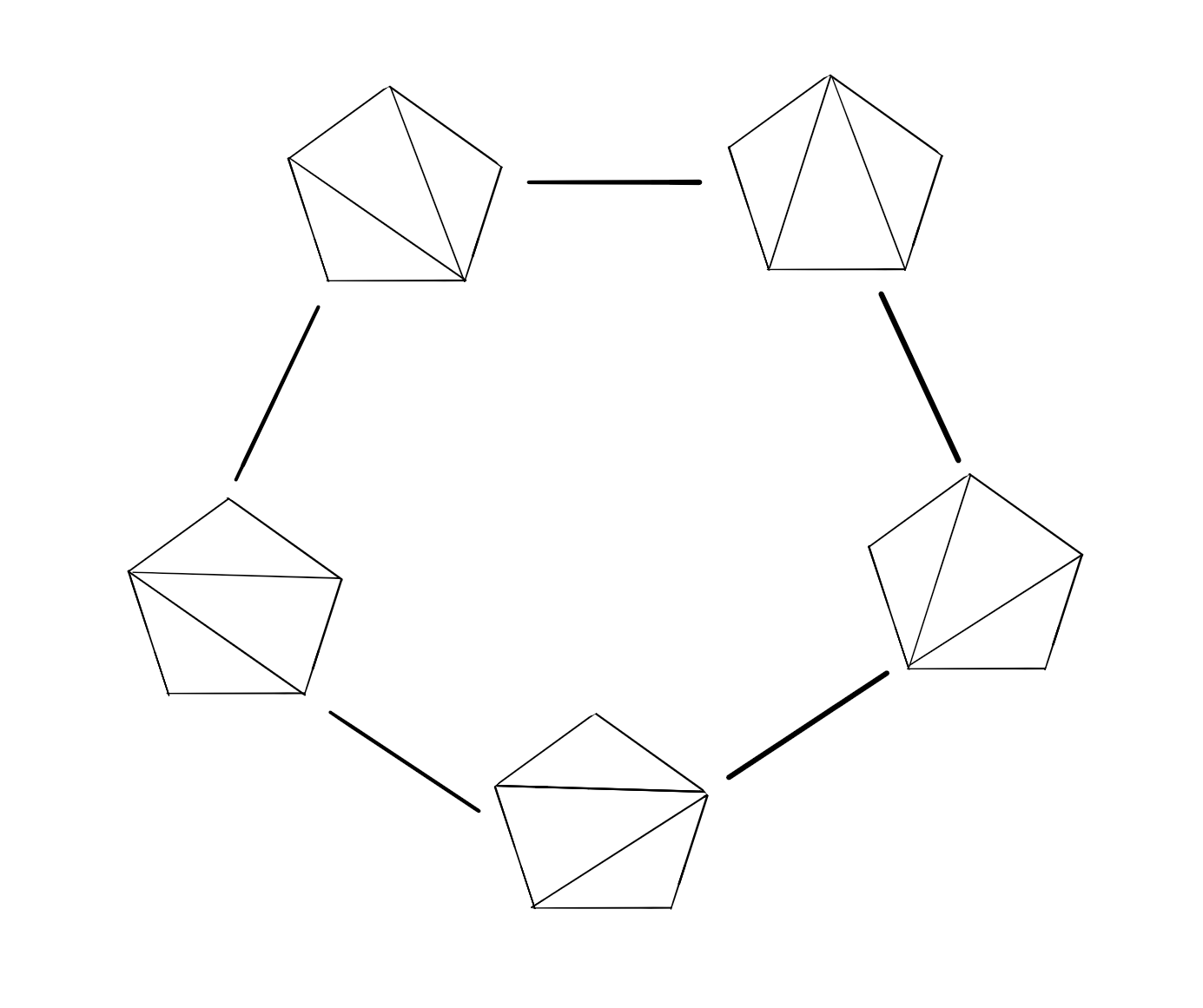}
    \caption{The flip graph for triangulations of a pentagon}
    \label{fig:pentagon}
\end{figure}
\end{comment}

Information about regular triangulations of $\mathcal{A}$ is captured by the \emph{secondary polytope} $\gkz$, introduced by Gelfand--Kapranov--Zelevinsky in \cite{gelfand1994discriminants}.  They associated a point to each triangulation of $\mathcal{A}$:
$$
\vertgkz:=\sum_{\triangle_B\in\tau}\left(\volu^{d-1}(\triangle_B)\cdot\sum_{b\in B} e_b\right),
$$

where $\volu^{d-1}$ is the standard Euclidean volume in $\mathbb{R}^{d-1}$.

\begin{definition} The \emph{secondary polytope} of $\A$ is defined as
$$
\gkz:=\conv({\vertgkz}~|~\tau\text{ is a triangulation of }\mathcal{A})
$$
\end{definition}
In \cite{gelfand1994discriminants} it is shown that the 1-skeleton of $\gkz$ is the flip graph  of regular triangulations of $\mathcal{A}$.

In the case $d-1=2$, if the points of $\A$ are in (strictly) convex position, it turns out that all the triangulations are regular, and the corresponding secondary polytope is the \emph{Tamari--Stasheff associahedron}.
\iffalse
\begin{figure}[h]
    \centering
    \includegraphics[width=0.5\textwidth]{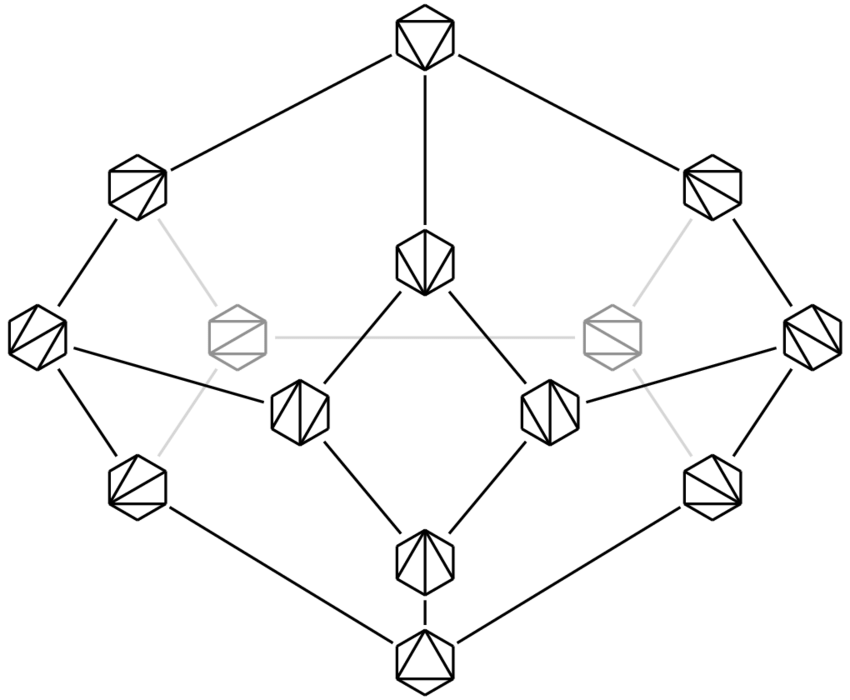}
    \caption{The associahedron for $|\A|=6$}
    \label{fig:associahedron}
\end{figure}
\fi

\subsection{Fine zonotopal tilings}\label{sect:fine_tilings}
 A $d$-dimensional zonotope is the Minkowski sum of line segments in $\R^d$. Given any point configuration $\mathcal{A}=\{a_1,...,a_n\}$ in $\mathbb{R}^{d-1}$, we can \emph{lift} to an associated zonotope in $\R^d$. Let $\mathcal{V}=\{(a_i,1) \in \R^d\mid a_i\in\mathcal{A}\}$. Then, the Minkowski sum of $v\in\V$ is the zonotope of $\mathcal{V}$ which we denote $\zono$.

Let $\cube^{\, n}$ be the unit cube in dimension $n$, and let $\pi\colon \cube^{\, n}\to\R^d$ be the linear projection defined by $\pi(e_i)=v_i$ for $i\in[n]$. Let a face of $\cube^{\, n}$ $F_{A,B}$ be defined by the equations: $x_i=1$ if $i\in A$, and $x_i=0$ if $i\in [n]\setminus (A\sqcup B)$. 
\begin{definition}
 A \emph{fine zonotopal tiling} $\T$ of $\zono$ is a collection of $d$-dimensional faces $F_{A,B}$ of the $n$-cube such that

\begin{itemize}
    \item the images of $\Pi_{A,B}=\pi(F_{A,B})$ for all $F_{A,B}\in\T$, are $d-$dimensional parellelpipeds that form a polyhedral subdivsion of $\zono$.   
    \item For any two faces $F_{A_1,B_1}$ and $F_{A_2,B_2}$, $\pi(F_{A_1,B_1}\cap F_{A_2,B_2})=\Pi_{A_1,B_1}\cap\Pi_{A_2,B_2}$.
\end{itemize}
\end{definition}

We denote the tiles in a fine zonotopal tiling by $$\Pi_{A,B}=\sum_{a\in A}e_a+\sum_{b\in B}[0,v_b].$$

\begin{definition}\label{def:reg-tiling}
A fine \emph{regular} zonotopal tiling is a fine zonotopal tiling that can be constructed as follows:  there is a height vector $h\in\R^n$ such that for all $v\in\mathcal{V},$ $\zono$ can be obtained from projecting the upper boundary of the lifted zonotope obtained by lifting $v_i$ to $(v_i,h_i)$.
\end{definition}

In other literature, regular tilings are also referred to as \emph{coherent} tilings.
\begin{figure}[h]
    \centering
    \includegraphics[width=0.6\textwidth]{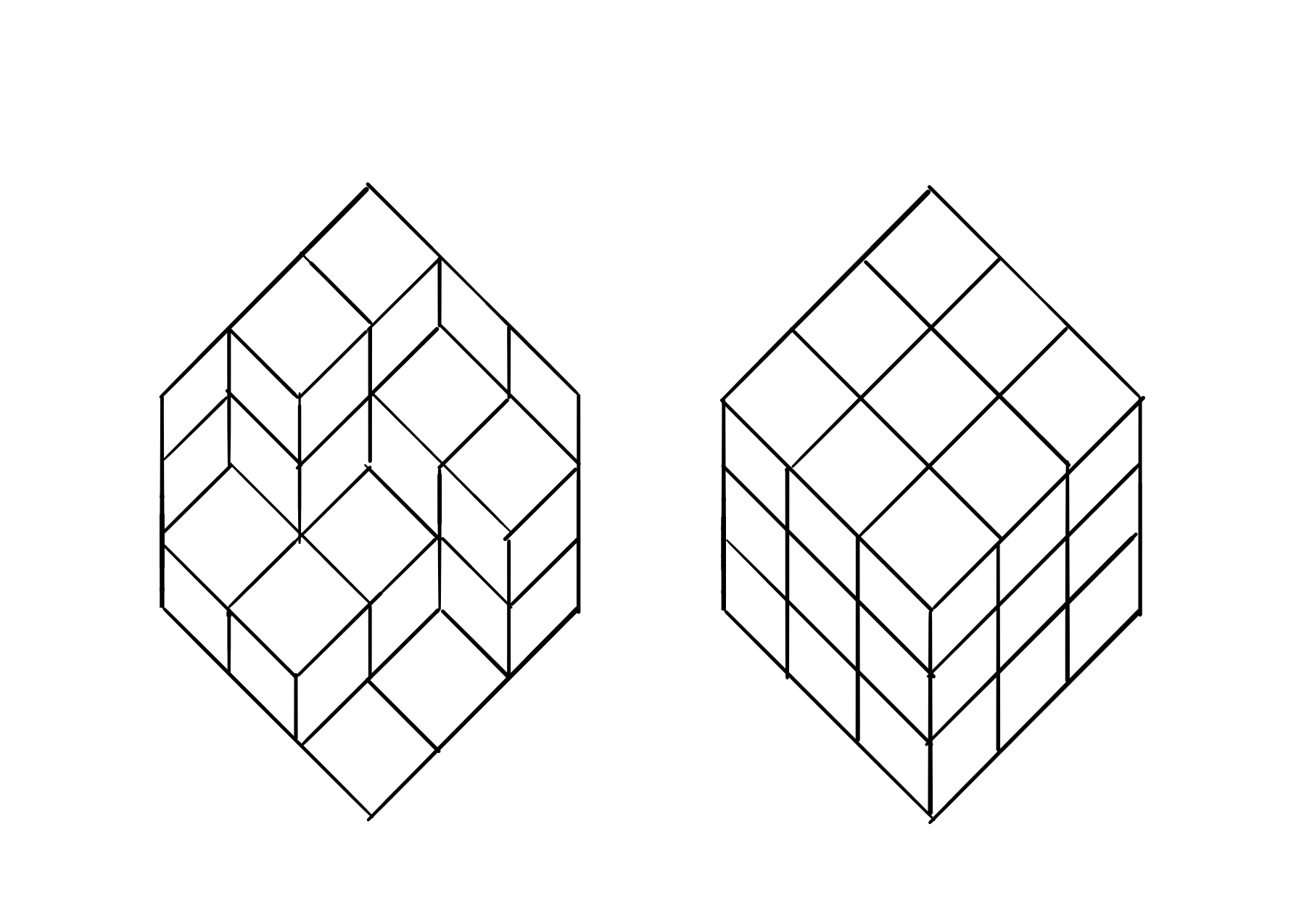}
    \caption{An irregular zonotopal tiling (left) and a regular zonotopal tiling (right)}
    \label{fig:irregular}
\end{figure}

\subsection{Flips in fine zonotopal tilings} %talk ab circuit stuff and i guess paths in R^n corresponding to sequences of flips

A \textit{flip} $F$ is an elementary transformation between two zonotopal tilings for a given point configuration $\mathcal{A}$.  If $|\mathcal{A}|=n=d+1$, there are exactly two fine zonotopal tilings of the associated zonotope $\zono$.  A flip equates to finding a translated copy of a zonotope on $d+1$ vectors spanning $\mathbb{R}^d$ and switching to the alternate tiling.

It is known that all regular fine zonotopal tilings for a given point configuration are flip-connected \cite{galashin2019higher}; further, when $d=2$, it is known that \textit{all} fine zonotopal tilings for a \emph{generic} point configuration are flip connected.

We can describe flips precisely in the context of \emph{circuits}:

\begin{definition}
A \textit{circuit} $C=(C^+,C^-)$ is a pair of disjoint subsets $C^+,C^-\subseteq[n]$ such that
\begin{enumerate}
    \item $(C^+\cup C^-) \setminus \{i\}$ is an independent set for all $i\in C^+\cup C^-$
    \item There exists a vector $\alpha(C)$ satisfying
    \begin{enumerate}
        \item $\alpha_i(C)>0$ for all $i\in C^+$
        \item $\alpha_i(C)<0$ for all $i\in C^-$
        \item $\alpha_i(C)=0$ for all $i\notin C^+\cup C^-$
        \item $\sum_{i\in [n]}\alpha_i(C) v_i=0$
    \end{enumerate}
\end{enumerate}
\end{definition}

For simplicity, we write $\underline{C}=C^+\cup C^-$ and $-C=(C^-,C^+)$.

Let $\mathcal{C_V}$ denote the set of circuits for a given vector configuration $\mathcal{V}$.  In the case $d=2$, circuits take one of two forms: $(\{i\},\{j\})$, where $v_i=v_j$ (here $\alpha(C)=e_i-e_j$), or a $\underline{C}=\{i,j,k\}$, where $v_i,v_j$ and $v_k$ are pairwise distinct.

We say a set $S\in [n]$ \textit{orients} $C$ positively if $C^+\subseteq S$ and $C^-\cap S=\emptyset$, and it orients $C$ positively if $C^-\subseteq S$ and $C^+\cap S=\emptyset$.  We say a collection of sets in $[n]$ orients $C$ positively if some set in the collection orients $C$ positively but no set orients $C$ negatively, and the collection orients $C$ negatively if some set in the collection orients $C$ negatively but no set orients $C$ positively.

We say a set $S\in[n]$ is a \emph{vertex} of the tiling $\T$ if $A\subseteq S\subseteq A\cup B$ for some tile $\Pi_{A,B}\in \T$. Let $V(\T)$ be the set of vertices of the tiles in $\T$.

\begin{proposition}
(\cite[Theorem 2.7 and Corollary 7.22]{galashin2017purity}) Given a fine zonotopal tiling $\T$ of $\zono$ and a circuit $C\in\mathcal{C_V}$, $V(\T)$ either orients $C$ positively or negatively.
\end{proposition}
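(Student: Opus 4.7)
The plan is to split the proposition into two parts: (i) \emph{uniqueness} -- no two vertices of $V(\T)$ orient $C$ in opposite directions; and (ii) \emph{existence} -- at least one vertex of $V(\T)$ orients $C$ in some direction. I expect part (i) to be the main content, admitting a direct combinatorial argument, while part (ii) is best handled conceptually via oriented matroid theory.

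For (i), I would first prove the local version on a single tile. Fix a maximal tile $\Pi_{A,B}$ and suppose $S_+, S_- \in V(\Pi_{A,B})$ orient $C$ positively and negatively, respectively. Since $A \subseteq S_+$ and $S_+ \cap C^- = \emptyset$, we have $A \cap C^- = \emptyset$; symmetrically $A \cap C^+ = \emptyset$. Combined with $C^\pm \subseteq S_\pm \subseteq A \cup B$, this forces $\underline{C} \subseteq B$. But $\{v_b : b \in B\}$ is linearly independent (since $\Pi_{A,B}$ is a $d$-dimensional parallelepiped with $|B|=d$), while $\underline{C}$ supports the nontrivial dependence $\sum_i \alpha_i(C)\, v_i = 0$, giving a contradiction. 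To globalize, I would use that the dual graph of $\T$ is connected (because $\zono$ is convex, so the polyhedral decomposition cannot split into components): any two tiles are joined by a chain of tiles sharing codimension-one faces, and vertex labels on a shared face lie in the vertex sets of both adjacent tiles, so a positive-orienting vertex on one side and a negative-orienting vertex on the other would eventually clash within a common tile, contradicting the local statement.

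For (ii), the cleanest route passes through the Bohne--Dress correspondence, which puts fine zonotopal tilings of $\zono$ in bijection with single-element liftings of the realizable oriented matroid of $\V$. Under this correspondence, the vertex labels in $V(\T)$ are exactly the sign vectors realized by a canonical family of topes of the lifted matroid, and the standard axioms of oriented matroids then guarantee that each original circuit $C$ is oriented by these sign vectors in precisely one direction. I expect (ii) to be the main obstacle for a self-contained write-up: a purely combinatorial proof would require selecting a tile whose index set interacts maximally with $\underline{C}$ (for instance, one whose $B$ contains a maximal independent subset of $\underline{C}$) and explicitly producing an oriented vertex there by resolving the dependence $\sum_i \alpha_i(C)\, v_i = 0$ inside that tile; the bookkeeping looks delicate without invoking the Bohne--Dress framework.
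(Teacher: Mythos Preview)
The paper does not give its own proof of this proposition; it is quoted as a known result from \cite{galashin2017purity} (their Theorem~2.7 and Corollary~7.22), so there is nothing in the present paper to compare your argument against.

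That said, your proposal has a genuine gap in the globalization step of part~(i). Your local argument is clean and correct: if a single tile $\Pi_{A,B}$ contained vertices orienting $C$ both positively and negatively, one deduces $\underline{C}\subseteq B$, contradicting independence of $B$. But the passage from ``no single tile has both orientations'' to ``no two tiles have opposite orientations'' via dual-graph connectedness does not go through as written. Concretely, suppose $S_+$ lies in a tile $U_0$ and $S_-$ in a tile $U_m$, with a dual path $U_0,U_1,\ldots,U_m$. The vertices on the shared facet between $U_{j}$ and $U_{j+1}$ need not orient $C$ at all, so there is no mechanism forcing a positive-orienting vertex to propagate into $U_1$, then $U_2$, and so on. The chain could pass through many tiles none of whose vertices orient $C$, and the local lemma is silent there; nothing forces an eventual ``clash within a common tile.'' To repair this you would need an additional ingredient, for instance showing that the set of tiles possessing a positively-orienting vertex is (dually) connected and has a specific boundary structure, or---more in the spirit of the cited source---arguing via the section map $s\colon\zono\to\cube^{\,n}$ associated to the tiling and tracking the linear functional $\langle\alpha(C),\cdot\rangle$ along $s$.

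Your plan for part~(ii) via the Bohne--Dress correspondence is reasonable and is indeed how the cited references ultimately proceed (through single-element liftings of the oriented matroid of $\V$), but as you acknowledge it is a sketch rather than a proof; the self-contained combinatorial alternative you outline would require substantially more work than indicated.
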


\begin{definition}
For a fine zonotopal tiling $\T$, we define a map $\sigma_{\T}: \mathcal{C_V}\rightarrow \{-1,1\}$ as follows:
\[
\sigma_{\T}(C)=
\begin{cases}
    1 &\text{if $\T$ orients $C$ positively}\\
    -1 &\text{if $\T$ orients $C$ negatively}
\end{cases}
\]
\end{definition}

We say there exists a \textit{flip}  along circuit $C$ between two tilings $\T_1$ and $\T_2$ if $\sigma_{\T_1}(C)=-\sigma_{\T_2}(C)$ and $\sigma_{\T_1}(C')=\sigma_{\T_2}(C')$ for all $C'\in\mathcal{C_V}$ such that $C'\neq\pm C$.

Using this definition, \cite{galashin2019higher} explicitly determined the change in the set of tiles between two tilings $\T_1$ and $\T_2$ that are connected by a single flip along circuit $C$.  Here, we give this description specifically for \emph{generic} point configurations (see \cite[Proposition 5.12]{galashin2019higher} for a generalization to nongeneric point configurations):

\begin{proposition}(\cite[Proposition 5.7]{galashin2019higher})
Let $F$ be a flip along circuit $C$ between $\T_1$ and $\T_2$.  Then there exists a set $A(F)\subseteq[n]\setminus\underline{C}$ such that

$$
\T_1\setminus \T_2=  \left\{ \Pi_{A(F)\cup \{j\},\underline{C}\setminus\{j\}}\right\}_{j\in C^+}\sqcup \left\{ \Pi_{A(F),\underline{C}\setminus\{j\}}\right\}_{j\in C^-} 
$$
$$
\T_2\setminus \T_1=   \left\{ \Pi_{A(F),\underline{C}\setminus\{j\}}\right\}_{j\in C^+}\sqcup \left\{ \Pi_{A(F)\cup\{j\},\underline{C}\setminus\{j\}}\right\}_{j\in C^-} 
$$

\end{proposition}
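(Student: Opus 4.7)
The plan is to locate the region of $\zono$ where $\T_1$ and $\T_2$ disagree, show that it is a translate of the small zonotope $Z_C := \sum_{i \in \underline{C}} [0, v_i]$ generated by the circuit vectors, and then read off the affected tiles from the two zonotopal subdivisions of $Z_C$ translated by $\sum_{a \in A(F)} v_a$. Since $\sigma_{\T_1}$ and $\sigma_{\T_2}$ agree on every circuit except $\pm C$, the symmetric difference $V(\T_1) \triangle V(\T_2)$ consists only of sets $S \subseteq [n]$ that orient $C$ non-neutrally. Consequently any tile $\Pi_{A,B}$ appearing on exactly one side of the flip must have at least one vertex $S = A \cup B'$ that orients $C$, which forces $\underline{C} \subseteq A \cup B$.

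Next I would combine the fineness of the tiling with the generic hypothesis $|\underline{C}| = d+1$ to pin down the structure: a changed tile has $B = \underline{C} \setminus \{j\}$ for exactly one $j \in \underline{C}$, with $A$ either equal to some $A(F) \subseteq [n] \setminus \underline{C}$ or to $A(F) \cup \{j\}$. The step of ruling out other possibilities, in particular those with $|A \cap C^{\pm}| \geq 2$ (so that $B$ could use vectors outside $\underline{C}$), uses fineness together with genericity: such a tile's base $A$ would already contain a positive part of a circuit, forcing the tile to span more than one elementary cell of the hypersimplex slice at height $|A|$, contradicting the unit combinatorial size of tiles in a fine zonotopal tiling. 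With this reduction, I would argue that $A(F) := A \setminus \underline{C}$ is common to all changed tiles by a connectivity argument: any two changed tiles share their labels outside $\underline{C}$ through a chain of shared $(d-1)$-faces, and the changed tiles tile a connected hexagonal region.

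Finally, with $A(F)$ in hand, a direct sign check using $\alpha(C)$ matches each changed tile with its role in the flip: the vertex $A(F) \cup \{j\}$ orients $C$ positively iff $j \in C^+$, so on the $\sigma(C) = +$ side of the flip the tiles are $\Pi_{A(F) \cup \{j\}, \underline{C} \setminus \{j\}}$ for $j \in C^+$ together with $\Pi_{A(F), \underline{C} \setminus \{j\}}$ for $j \in C^-$, and the roles swap on the $\sigma(C) = -$ side. The main obstacle is the middle step: fineness must be invoked carefully to exclude oversized tile shapes, and this is exactly where the genericity hypothesis earns its keep, keeping all circuits small enough that the local flip happens inside a single translate of $Z_C$ rather than spilling into a higher-dimensional flip geometry.
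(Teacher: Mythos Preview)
The paper does not prove this proposition; it is quoted from \cite[Proposition~5.7]{galashin2019higher} in the background section and used without argument, so there is no in-paper proof to compare against.

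Your outline follows the same strategy as the cited source: the changed tiles sit inside a single translate of the small zonotope $Z_C$, and the two fine tilings of $Z_C$ give the two displayed collections. Two steps in your sketch deserve tightening. First, from ``some vertex $S=A\cup B'$ orients $C$ positively'' you infer $\underline{C}\subseteq A\cup B$; but positive orientation only gives $C^+\subseteq S\subseteq A\cup B$ and $C^-\cap S=\emptyset$, which says nothing about $C^-\subseteq A\cup B$. The clean way through is to first prove the stronger statement $V(\T_1)\triangle V(\T_2)=\{A(F)\cup C^+,\,A(F)\cup C^-\}$ for a single set $A(F)$ (this is how the cited proof proceeds), and then deduce that every changed tile must contain one of these two as a vertex, which immediately forces $B\subseteq\underline{C}$ and hence $B=\underline{C}\setminus\{j\}$ by fineness. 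Second, your ``connectivity argument'' that all changed tiles share the same $A\setminus\underline{C}$ is asserted rather than shown; again, once $V(\T_1)\triangle V(\T_2)$ is pinned down to two sets differing only inside $\underline{C}$, the common $A(F)$ falls out for free and no separate connectivity step is needed. These are gaps of detail, not of strategy.
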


\begin{figure}[h]
    \centering
    \includegraphics[width=0.625\textwidth]{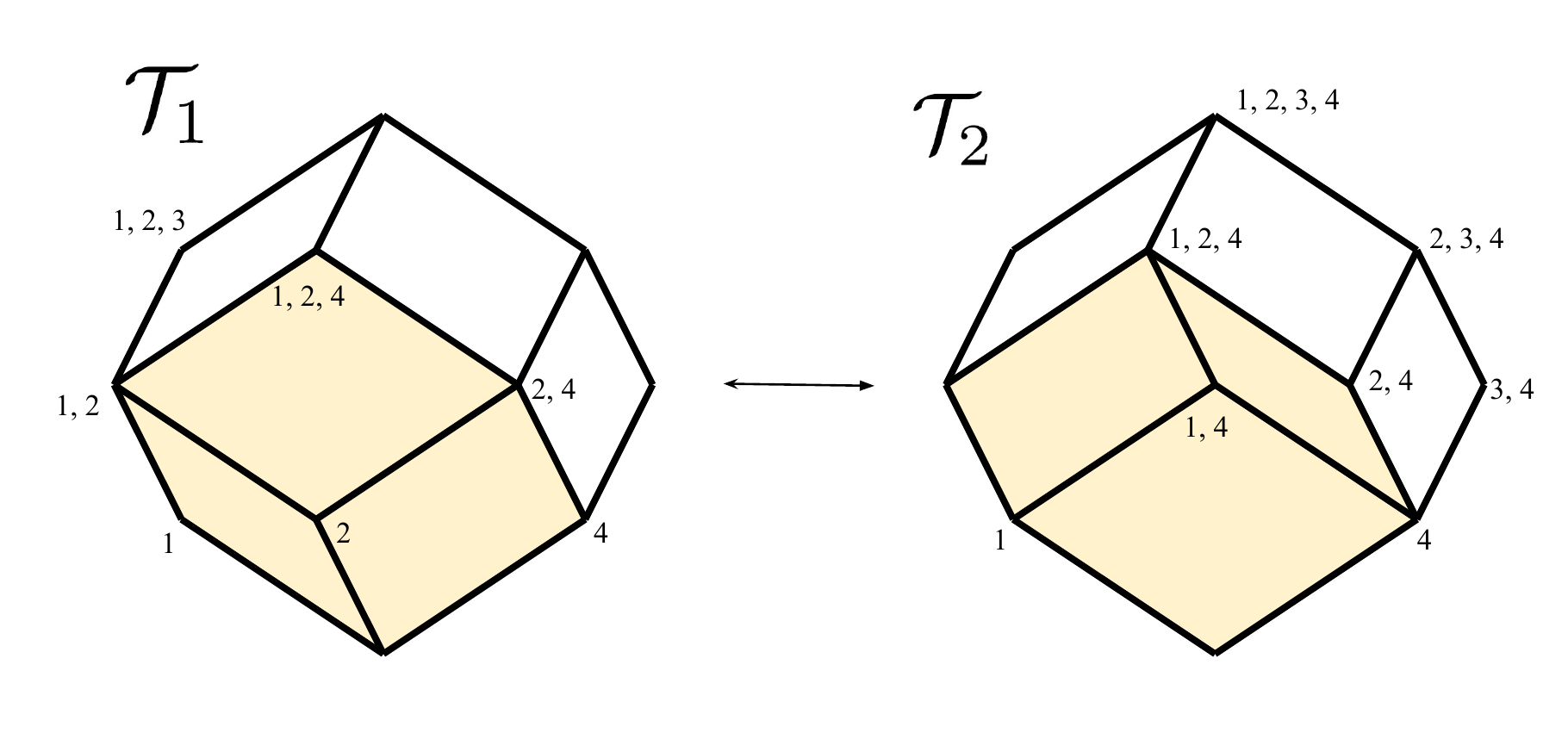}
    \caption{A flip between $\T_1$ and $\T_2$ along circuit $C=(\{1,4\},\{2\})$ for a generic point configuration $\A=\{-1,0,1,2\}$.} 
    \label{fig:generic flip}
\end{figure}

We say the \textit{level} of a flip to be $\level(F)=|A(F)|+1$.

For \emph{regular} zonotopal tilings, circuit orientations can also be described using vectors in $\mathbb{R}^n$. Let $\hv$ be the set of vectors in $\mathbb{R}^n$ such that $\langle h,\alpha(C)\rangle= 0$ for some $C\in\mathcal{C_V}$, where $\langle\cdot,\cdot\rangle$ denotes the standard dot product.
We say a height vector $h\in\mathbb{R}^n$ is \textit{generic} if $h\in\mathbb{R}^n\setminus\hv$.

\begin{definition}
For a generic height vector $h$, define the map $\sigma_h\colon\mathcal{C_V}\rightarrow\{1,-1\}$ as follows: 
\[
\sigma_h(C)=
\begin{cases}
1 & \text{if } \langle h,\alpha(C)\rangle>0\\
-1 & \text{if } \langle h,\alpha(C)\rangle<0
\end{cases}
\]
\end{definition}

For a generic height vector $h$, define $\T_h$ to be the fine zonotopal tiling such that $\sigma_{\T_h}=\sigma_h$. Galashin, Postnikov, and Williams showed that $\T_h$ is exactly the regular tiling obtained in definition \ref{def:reg-tiling} using height vector $h$ \cite{galashin2019higher}.  

Using this notion of circuits, we can describe sequences of flips between regular tilings in terms of paths through $\mathbb{R}^n$ which intersect one hyperplane in $\hv$ at a time. Note that this description of flips can easily give us the well known flip diameter for regular fine zonotopal tilings of $\binom{n}{d+1}$ in the generic case.

\subsection{Higher secondary polytopes} 
We restrict our attention to the case where $\A\subset\R^{d-1}$ contains distinct points. We define the main object of study for these configurations, which generalizes the GKZ secondary polytope.

Let $\T$ be a fine zonotopal tiling. Let $\{e_i\}_{i\in[n]}$ be the standard basis for $\R^n$. Let $e_A=\sum_{a\in A}e_a$ where $a\in[n]$. Furthermore, let $\text{Vol}^d(\Pi_{A,B})=|\det (v_i)_{i\in B})|$. Define:
$$\widehat{\text{vert}}_k(\T)=\sum_{\Pi_{A,B}\in\T,\\|A|=k}\volu^d(\Pi_{A,B})\cdot e_A. $$
\begin{definition}
The \emph{$k^\text{th}$ higher secondary polytope} is
$$\sig{\A,k}=\conv\{\widehat{\text{vert}}_k(\T)\mid \T\text{ is a fine regular zonotopal tiling}\}.$$
\end{definition}
We often omit $\A$ from the notation and write $\sig{k}$.

If $k\notin[n-d]$, $\sig{k}$ is a single point. It is conjectured in \cite{galashin2019higher} that the word \emph{regular} can be omitted from the definition of higher secondary polytope. Meaning, it is conjectured that $\widehat{\vertex}_k(\T)$ for nonregular tilings lies within the convex hull of the regular tilings. 

\begin{figure}[!htb]
\minipage{0.32\textwidth}\centering
  \includegraphics[width=5cm]{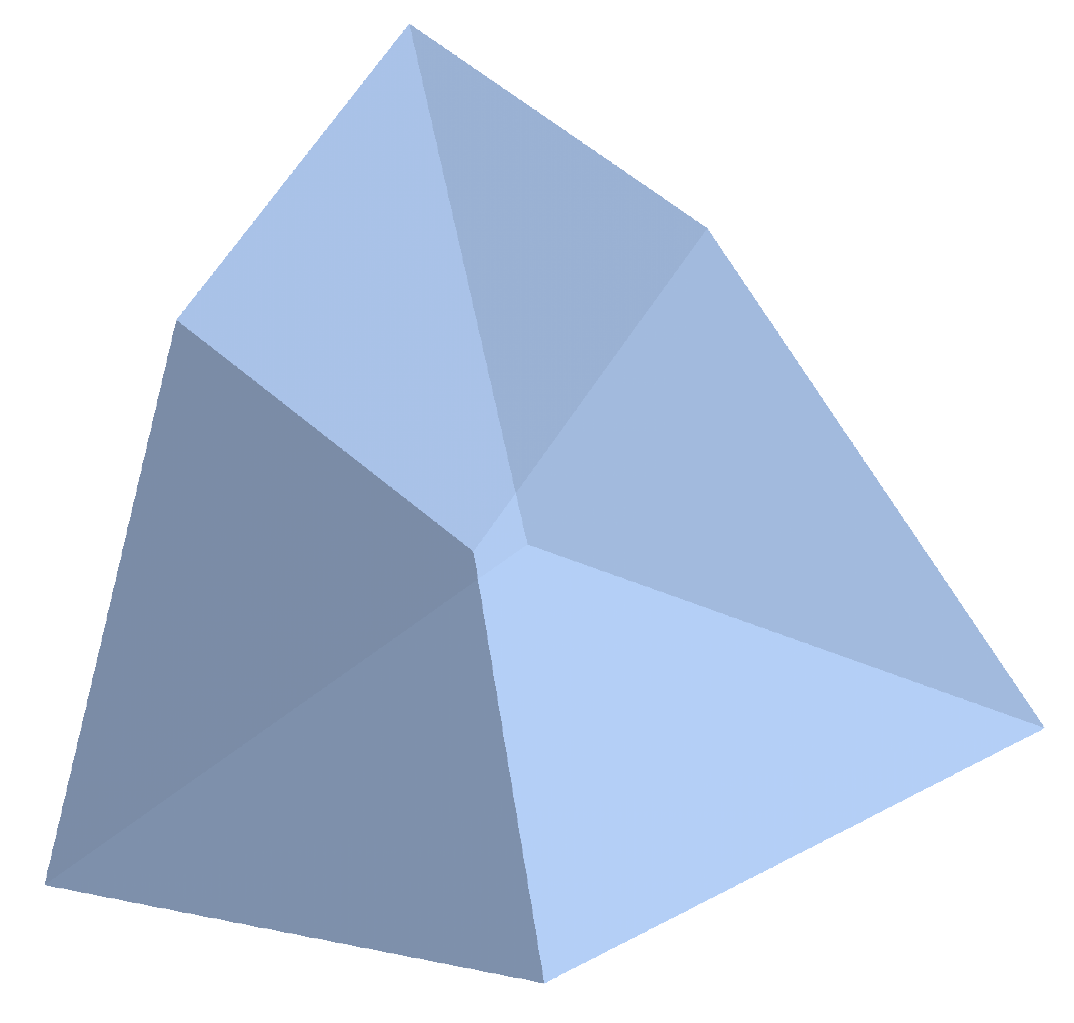}
  %\caption*{$\sig{1}$}
\endminipage\hfill
\minipage{0.32\textwidth}\centering
  \includegraphics[width=5cm]{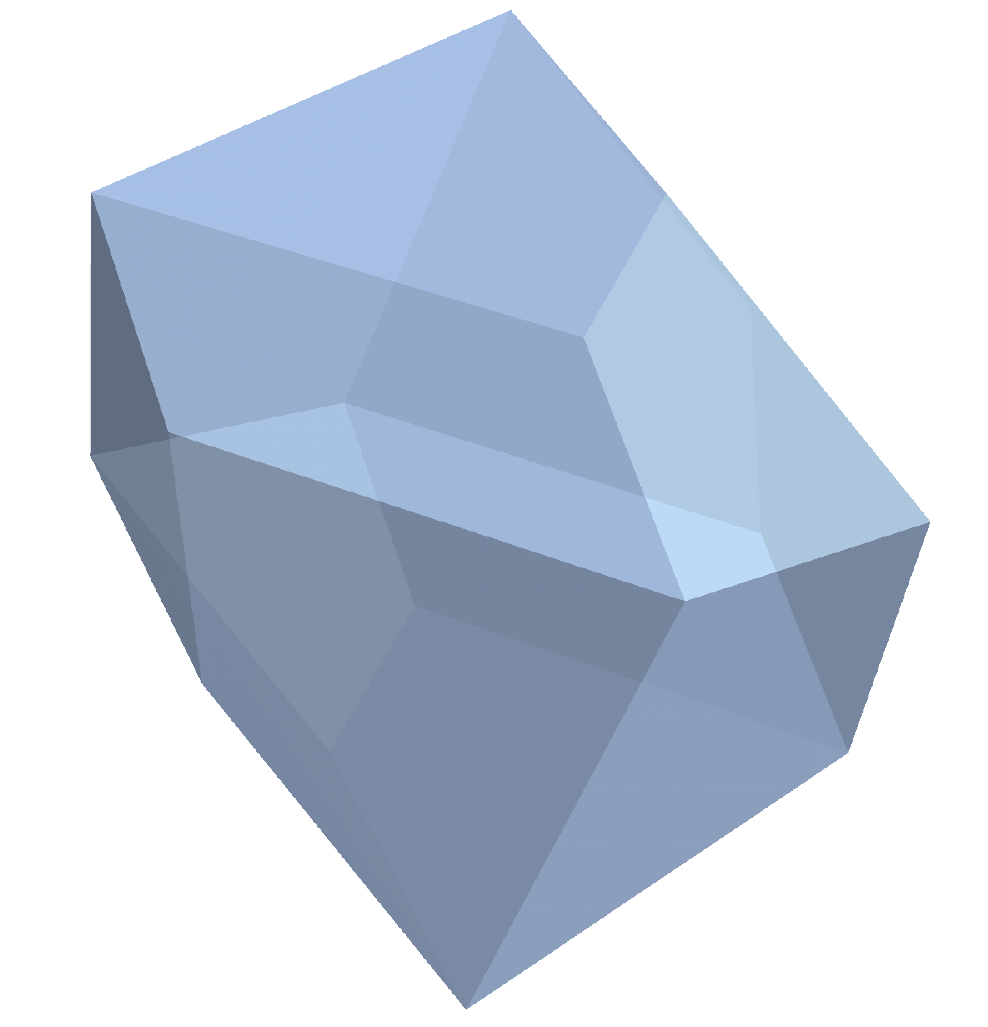}
  %\caption*{$\sig{2}$}
\endminipage\hfill
\minipage{0.32\textwidth}\centering
  \includegraphics[width=5cm]{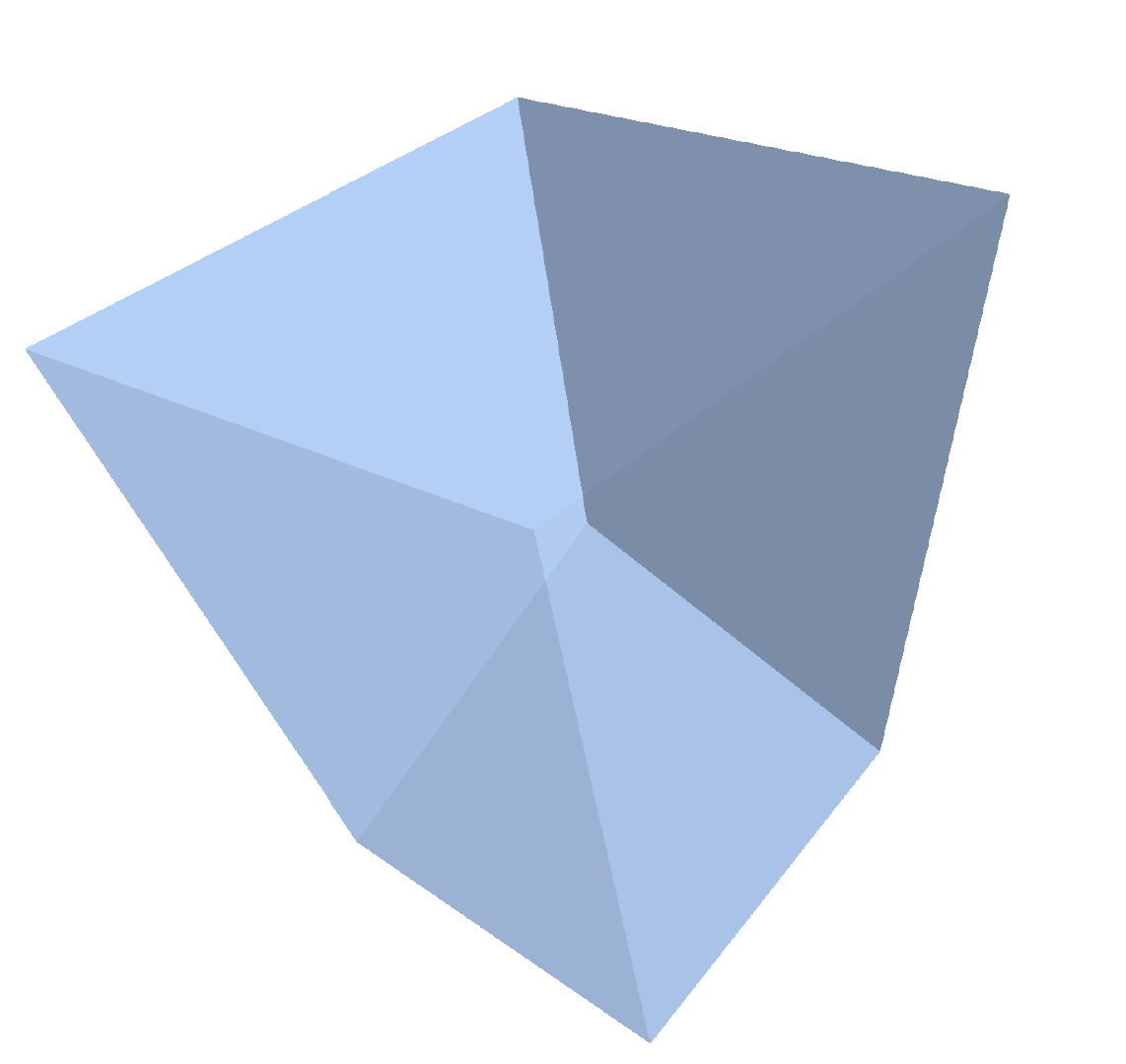}
  %\caption*{$\sig{3}$}
\endminipage
\caption{The three nontrivial higher secondary polytopes ($\sig{1}$, $\sig{2}$, and $\sig{3}$ from left to right) for $d=2, n=5$.}\label{fig:nis5}
\end{figure}

In the generic case, any two tilings of $\zono\subset\R^2$ can be connected by a sequence of flips \cite{kenyon1993tiling}. If two tilings $\T_1$ and $\T_2$ of $\zono$ can be connected by a sequence of flips that do \emph{not} happen at level $k,$ then $\T_1$ and $\T_2$ are called {\it $k$-equivalent}. The vertices of $\sig{k}$ can be interpreted as $k$-equivalence classes of regular tilings, and edges can be interpreted as flips between $k$-equivalent classes of tilings.

From any zonotopal tiling of $\zono$, we can recover a triangulation of $\A$ by $\tau = \{\triangle_B\mid\Pi_{\emptyset,B}\in\T\}$. For higher secondary polytopes, $\tau$ corresponds to a vertex of $\sig{1}$. Galashin, Postnikov, and Williams proved that $\sig{1}$ is a dilation of  the well known GKZ polytope. Although, this is not immediately obvious since the definition of the secondary polytope is based on $A$, the offset vector, while the definition of the secondary polytope is based on $B$, the basis vectors of the tiles.  

For the following theorem and the rest of the paper, we use $\shift$ to mean that two polytopes are equivalent up to translation and dilation.

\begin{theorem}[{\cite[Theorem~2.2(i),~2.2(iv)]{galashin2019higher}}] \label{thm:GPWmain}Let $\A\subset \R^{d-1}$ be a point configuration. Recall $\mathcal{V}\subset\R^d$ is the lift of $\A$ and $\zono$ is the zonotope of $\mathcal{V}$. Then, we have the following.
\begin{enumerate}
    \item $\Sigma_{\A}^{\text{GKZ}}\shift \frac{1}{(d-1)!}\Sigma_{\A,1}$
    \item  $\Sigma_{\A,k}\shift -\Sigma_{\A, n-d-k+1}$.
\end{enumerate}

\end{theorem}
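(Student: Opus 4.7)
The plan is to compare the two sides of each identity edge-by-edge: both polytopes in each claimed equivalence are convex hulls with vertices in explicit bijection, and matching the corresponding edge vectors (up to a common positive scalar and a sign) establishes equivalence up to translation and dilation. Three ingredients recur throughout: (a) the flip formula describing $\T_1 \setminus \T_2$ and $\T_2 \setminus \T_1$ for a single flip along a circuit $C$; (b) Cramer's rule applied to $\sum_i \alpha_i(C)\,v_i = 0$, giving $V_j := \volu^{d-1}(\triangle_{\underline{C} \setminus \{j\}}) = K\,|\alpha_j(C)|$ for a common constant $K > 0$ depending on $C$; and (c) the identity $\sum_i \alpha_i(C) = 0$, forced by the last coordinate of $v_i = (a_i, 1)$.

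For part (i), any flip between regular triangulations $\tau_1, \tau_2$ lifts to a level-$1$ flip at some circuit $C$ connecting representative tilings $\T_1, \T_2$. Using (a), the identity $\volu^d(\Pi_{\{j\}, \underline{C} \setminus \{j\}}) = (d-1)!\,V_j$, and the expansion $e_{\underline{C} \setminus \{j\}} = e_{\underline{C}} - e_j$ (in which the $e_{\underline C}$ term cancels by (c)), a direct computation yields
\[
\text{vert}^{\text{GKZ}}(\tau_2) - \text{vert}^{\text{GKZ}}(\tau_1) = -K\,\alpha(C), \qquad \widehat{\text{vert}}_1(\T_2) - \widehat{\text{vert}}_1(\T_1) = -(d-1)!\,K\,\alpha(C),
\]
so corresponding edges differ by the factor $(d-1)!$. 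I must also verify that $\widehat{\text{vert}}_1$ is well-defined on $1$-equivalence classes, i.e., invariant under level-$\ell$ flips with $\ell \geq 2$; this is again the same calculation, since the net level-$1$ contribution at a higher-level flip has the form $e_{A(F)} \cdot \sum_i \alpha_i(C) = 0$.

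For part (iv), I use the central-symmetry involution $\T \mapsto \T^*$ induced by $x \mapsto \vec{1} - x$ on $\cube^{\,n}$, which sends the face $F_{A,B}$ to $F_{[n] \setminus (A \cup B),\,B}$. Tracing through the flip formula, a level-$k$ flip at circuit $C$ with $A(F) = A_0$ maps to a flip at the reversed circuit $-C$ with $A(F^*) = [n] \setminus (A_0 \cup \underline{C})$, of level $|A(F^*)| + 1 = n - d - k + 1$. Hence the involution bijects $k$-equivalence classes of regular tilings with $(n-d-k+1)$-equivalence classes, and thus vertices of $\sig{k}$ with vertices of $\sig{n-d-k+1}$. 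A direct analogue of the computation in (i) shows that the change in $\widehat{\text{vert}}_{n-d-k+1}$ at the image flip equals $-(d-1)!\,K\,\alpha(-C) = +(d-1)!\,K\,\alpha(C)$, the exact negative of the change $-(d-1)!\,K\,\alpha(C)$ in $\widehat{\text{vert}}_k$ at the original flip; this gives $\sig{k} \shift -\sig{n-d-k+1}$.

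The subtle point is the shift-by-one in the index of (iv): each tile's $|A|$-level shifts by $-d$ under complementation of $A \cup B$, but the flip level $|A(F)| + 1$ is one more than the smaller of the two $|A|$-values appearing in a flip, and the involution swaps those two values in each flip, producing the extra $+1$. Once this reindexing is unwound, both parts rest on the same Cramer-rule identity $V_j = K|\alpha_j(C)|$ together with $\sum_i \alpha_i(C) = 0$.
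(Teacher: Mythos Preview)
The paper does not supply its own proof of this theorem; it is quoted verbatim as background from \cite{galashin2019higher} (Theorem~2.2(i),(iv) there), so there is nothing in the present paper to compare your argument against.

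That said, your sketch is along the right lines and identifies the correct mechanisms: the Cramer-rule identity $\volu^{d-1}(\triangle_{\underline{C}\setminus\{j\}}) = K\,|\alpha_j(C)|$, the affine relation $\sum_i \alpha_i(C)=0$ coming from the last coordinate of the lifted vectors, and the cube involution $x\mapsto \vec{1}-x$ for part~(iv). Two points would need tightening in a full write-up. First, matching edge vectors with a common scalar only yields the $\shift$ conclusion once you know the $1$-skeleta are connected and that the vertex bijection you use (regular triangulations $\leftrightarrow$ $1$-equivalence classes of regular tilings in (i); the $*$-involution on equivalence classes in (iv)) really is a bijection of vertex sets; both facts are available but should be invoked explicitly. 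Second, in (iv) a level-$k$ flip changes tiles at both $|A|=k-1$ and $|A|=k$, so when you compare the change in $\widehat{\text{vert}}_k$ with the change in $\widehat{\text{vert}}_{n-d-k+1}$ under the image flip you must track which of the two tile-layers feeds each quantity; your ``shift-by-one'' remark is the right observation, but the bookkeeping should be made explicit.
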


\subsection{Hypertriangulations and lifting hypertriangulations}\label{subsec:hyper}

We can introduce both triangulations of $\A$ and zonotopal tilings of $\zono$ using the language of \emph{(fine) $\pi$-induced subdivisions}, for a linear projection map $\pi$. Without going into detail (see~\cite[Definition~9.1]{ziegler2012lectures} for a complete definition), we mention here that the map $\pi \colon \cube^{\, n} \to \zono$ introduced in Section \ref{sect:fine_tilings} gives rise to zonotopal tilings of $\zono$, while the restriction of $\pi$ onto the simplex $\triangle_{1,n} = \cube^{\, n} \cap \{x_1 + \ldots + x_n = 1\}$ gives rise to triangulations of $\conv \A$. One can similarly consider the restriction $\pi\vert_{\triangle_{k,n}}$ on the $k^{\text{th}}$ hypersimplex, and the corresponding fine $\pi$-induced subdivisions of $\pi(\triangle_{k,n})$ are called \emph{hypertriangulations} in~\cite{olarte2019hypersimplicial}. We observe that the polytope $\pi(\triangle_{k,n})$ being partitioned is the \emph{deleted $k$-sum of $\A$}:
\[
\A^{(k)} = \conv\left\{a_{i_1} + \ldots + a_{i_k} ~\middle\vert~ 1 \le i_1 < \ldots < i_k \le n \right\}.
\]
The hypertriangulations that can be represented as the cross-sections of zonotopal tilings are called \emph{lifting} hypertriangulations. 
\begin{figure}[h]
    \centering
    \includegraphics[width=0.9\textwidth]{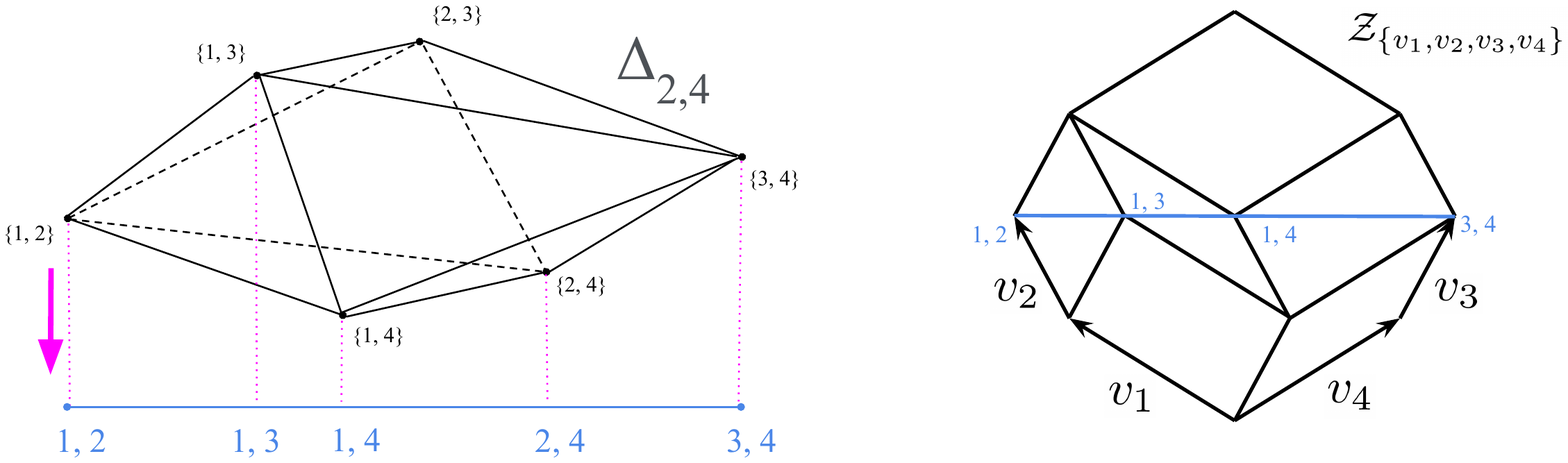}
    \caption{Non-lifting hypertriangulation $12 \to 13 \to 14 \to 24 \to 34$ and lifting hypertriangulation $12 \to 13 \to 14 \to 34$}
    \label{fig:lifting}
\end{figure}

For $d=2$, the $k^{\text{th}}$ level hypertriangulations are \emph{monotone paths on $\triangle_{k,n}$}, that is, chains of $k$-element subsets of $[n]$, starting at $[k]$, ending at $[n]\setminus[n-k]$, and with steps of the form $I \cap \{i\} \to I \cap \{j\}$, $i < j$. Not all hypertriangulations are lifting, as the following example of Galashin \cite[Example~10.3]{postnikov2018positive} shows: the monotone path $12 \to 13 \to 14 \to 24 \to 34$ is not lifting, because the sets $\{1,3\}$ and $\{2,4\}$ are not \emph{strongly separated} (the definitions are given in Section~\ref{sec:hypertriangulations}).

The flips in hypertriangulations correspond to removing/inserting a set from/to the chain, whenever the monotonicity condition is preserved. The flips in lifting hypertriangulations are induced by zonotopal flips, and correspond to removing/inserting a set whenever both monotonicity and separation conditions are preserved.

\subsection{Higher Bruhat order}
A vector configuration is called \textit{cyclic} if for all $v_i\in \mathcal{V}$, $v_i=(a_i^{d-1},...,a_i^2,a_i,1)$ for some real number $a_i$. Let $Z(n,d)$ denote the set of tilings for a cyclic vector configuration $\mathcal{V}$ with $|\mathcal{V}|=n$ and $\mathcal{V}\subset\mathbb{R}^d$.

Ziegler \cite{ziegler1991higher} showed that the flip graph for the set of fine zonotopal tilings of a cyclic configuration of $n$ vectors spanning $\mathbb{R}^d$ is isomorphic to the Hasse diagram of the \textit{higher Bruhat order} $B(n,d)$.  We will not define this poset, but instead will restrict our attention to some of its key properties:

\begin{theorem} (\cite[Theorem 4.1]{ziegler1991higher})\label{thm: bruhat}
\begin{enumerate}
    \item $B(n,d)$ is a graded poset of rank $\binom{n}{d+1}$ with unique minimal and maximal elements which we call $\tmin$ and $\tmax$ respectively.
    \item The flip graph of $Z(n,d)$ forms the Hasse diagram of $B(n,d)$.
    \item There exists a bijection between $Z(n,d+1)$ and the set of commutation classes of length $\binom{n}{d+1}$ flip sequences from $\T_{\min}$ to $\T_{\max}$.
    \item A tile $\Pi_{A,B}$ such that $|A|=k$ in a tiling of $Z(n,d+1)$ corresponds to a flip at level $k+1$ in a maximal chain in $B(n,d)$ corresponding to that tiling.
\end{enumerate}
\end{theorem}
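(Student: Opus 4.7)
The plan is to follow Ziegler's original strategy, encoding each fine zonotopal tiling $\T \in Z(n,d)$ by its circuit orientation map $\sigma_\T \colon \mathcal{C}_\mathcal{V} \to \{\pm 1\}$ from the background section. For a cyclic configuration, every $(d+1)$-subset $S \in \binom{[n]}{d+1}$ determines a unique circuit with a canonical alternating sign pattern (a standard fact about Gale duals of cyclic polytopes), so $\sigma_\T$ is equivalently a subset $U(\T) \subseteq \binom{[n]}{d+1}$ recording the negatively oriented circuits. The first step is to identify the collection of all such $U(\T)$ with the elements of $B(n,d)$ via the Manin--Schechtman packet-consistency axiom: restricting $\T$ to any $(d+2)$-point subconfiguration induces an admissible ordering on its $d+2$ constituent circuits, which is exactly the local condition defining the higher Bruhat order.

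With this encoding in hand, parts (1) and (2) follow quickly. A single flip in $Z(n,d)$ modifies $U(\T)$ by exactly one element, namely the circuit being flipped, so the flip graph is graded with rank at most $\binom{n}{d+1}$, and single-element modifications are precisely the covering relations of $B(n,d)$. The unique minimum $\tmin$ and maximum $\tmax$ arise as the regular tilings coming from the generic height vectors $h_i = i$ and $h_i = -i$ respectively, and all other tilings can be shown flip-connected to both by continuously deforming the height vector while crossing one hyperplane of $\hv$ at a time. For parts (3) and (4), I would pass from $\T' \in Z(n,d+1)$ to a chain in $B(n,d)$ by slicing: realize $\T'$ as regular with height vector $h \in \R^n$ (or appeal to a Bohne--Dress representation in the nonregular case), and continuously perturb $h$ so that circuits pass through neutrality one at a time; this produces a sequence of $d$-dimensional tilings connected by flips, in which each $(d+1)$-dimensional tile $\Pi_{A,B} \in \T'$ witnesses exactly one flip whose level is $|A|+1$, since $|A|$ offset vectors have already been selected ``below'' at the moment the circuit flips. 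Swapping the order in which geometrically independent slicing walls are crossed produces commutation equivalence on the resulting chains.

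The principal obstacle is the bijectivity claim in part (3): I need distinct tilings in $Z(n,d+1)$ to give distinct commutation classes, and conversely every commutation class in $B(n,d)$ to arise from a tiling. This forces one to prove that non-commuting flips appear in a fixed relative order in \emph{every} slicing of a given $\T'$, independent of which perturbation is used. This is the technical crux of Ziegler's argument, handled by induction on $n$ using the combinatorics of the cyclic polytope and a careful analysis of how adjacent Bruhat packets interact, and I would follow the same route rather than attempt a shortcut.
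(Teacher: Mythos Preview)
The paper does not prove this theorem at all: it is quoted verbatim as background from Ziegler's paper \cite{ziegler1991higher} and used as a black box later in Section~\ref{sect:diameter}. There is therefore no ``paper's own proof'' to compare against; the authors simply cite the result and move on.

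Your sketch is a reasonable outline of how Ziegler's argument actually goes, and nothing in it is wrong in spirit. A couple of points to be careful about if you ever flesh it out. First, in part~(3) you cannot ``realize $\T'$ as regular'' in general, and the Bohne--Dress correspondence gives you a one-element lifting of the underlying oriented matroid rather than a height vector, so the ``continuously perturb $h$'' picture is only heuristic in the nonregular case; the actual argument works purely combinatorially with inversion sets. Second, your description of $\tmin$ and $\tmax$ via height vectors $h_i = \pm i$ is fine for $d=2$ but for general $d$ one wants $h_i$ on a moment curve of one degree higher; more importantly, flip-connectedness of \emph{all} tilings (not just regular ones) to $\tmin$ is exactly the content of part~(1), so deducing it from a height-vector deformation is circular unless you separately establish that every element of $Z(n,d)$ is regular, which is false for $d \ge 3$. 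Ziegler's route avoids this by working directly with the combinatorics of admissible orders on $\binom{[n]}{d+1}$ and never invoking regularity.
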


\section{Diameters of higher secondary polytopes for $d=2$}\label{sect:diameter}
Now, we restrict our attention to when $\A$ is generic; that is, we restrict ourselves to the case where $\A$ contains distinct points or, equivalently, when the number of bases of $\R^2$ that can be created from $\mathcal{V}$, the lift of $\A$, is ${[n]\choose 2}$.

%We will first prove a sharp lower bound for the diameter of $\sig{k-1}+\sig{k}$. Then, we prove a sharp lower for the diameter of $\sig{k}.$ 

For notational convenience, we will denote the ``diameter of polytope $P$'' as $\delta(P)$; that is, $\delta(P)$ is the diameter of the $1$-skeleton of $P$, measured using the edge-distance. 

The results of this section are the following two theorems.

\begin{theorem}
\label{thm:diam_k+k-1}
For $d=2$ and for all $k\in[n-d]$, $\delta (\widehat{\Sigma}_k+\widehat{\Sigma}_{k-1})=2k(n-k)-n$.
\end{theorem}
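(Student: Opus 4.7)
The plan is to prove matching lower and upper bounds via a tile-count invariant, using Ziegler's correspondence (Theorem~\ref{thm: bruhat}) with the higher Bruhat order $B(n,2)$. The key local fact is this: a flip along circuit $T = \{i<j<l\}$ at flip-level $\ell = |A(F)|+1$ changes $\widehat{\vertex}_k(\T)$ only when $\ell = k$, because for flips with $\ell = k+1$ the three contributing volumes satisfy $-(a_l-a_i)+(a_l-a_j)+(a_j-a_i) = 0$ and cancel identically. Hence edges of $\sig{k}+\sig{k-1}$ correspond exactly to flips at flip-level $k-1$ or $k$, and the diameter equals the worst-case minimum number of such flips between two tilings.

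For the lower bound I would introduce the invariant
\[
\Phi(\T) \;:=\; \sum_{j=0}^{k-2} t_j(\T) \;+\; \sum_{j=0}^{k-1} t_j(\T), \qquad t_j(\T) := \bigl|\{\Pi_{A,B} \in \T : |A| = j\}\bigr|.
\]
Because every flip at flip-level $\ell$ shifts $t_{\ell-1}$ by $+1$ and $t_\ell$ by $-1$ (leaving other $t_j$ fixed), a short calculation shows that $\Phi$ changes by exactly $\pm 1$ under flips at flip-level $k-1$ or $k$ and is invariant under all other flips. Therefore the distance in $\sig{k}+\sig{k-1}$ between any tilings $\T_1,\T_2$ is at least $|\Phi(\T_1)-\Phi(\T_2)|$. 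Taking $\T_{\min}$ and $\T_{\max}$, whose cyclic $d=2$ tile-count profiles are the extremes $t_j(\T_{\min}) = j+1$ and $t_j(\T_{\max}) = n-j-1$, a routine expansion yields $\Phi(\T_{\max}) - \Phi(\T_{\min}) = 2k(n-k) - n$.

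For the upper bound I would exhibit an explicit geodesic from $\T_{\min}$ to $\T_{\max}$ matching this value: a maximal chain in $B(n,2)$ has $\binom{n}{3}$ flips (Theorem~\ref{thm: bruhat}), and the telescoping identity $f_\ell = \sum_{j<\ell}(t_j(\T_{\max})-t_j(\T_{\min})) = \ell(n-\ell-1)$ forces the number of flips at each flip-level, so $f_{k-1}+f_k = (k-1)(n-k)+k(n-k-1) = 2k(n-k)-n$ along the chain. For an arbitrary pair of tilings, one routes through the Bruhat order and bounds the cumulative $(k-1,k)$-flip count using the rigid range $t_j(\T) \in [\min(j+1,n-j-1),\max(j+1,n-j-1)]$ satisfied in the cyclic case. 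The main obstacle is this upper bound for Bruhat-incomparable pairs: a naive triangle detour via $\T_{\min}$ or $\T_{\max}$ would double the target, so one must instead show that the $1$-skeleton of $\sig{k}+\sig{k-1}$ is isomorphic (as promised in Section~\ref{subsec:hyper}) to the flip graph of level-$k$ lifting hypertriangulations, whose insertion-deletion structure supports a $\Phi$-monotone geodesic of length exactly $|\Phi(\T_1)-\Phi(\T_2)|$.
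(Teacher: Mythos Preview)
Your lower bound is correct and amounts to the paper's potential argument in tidier clothing: the invariant $\Phi$ is a repackaging of the paper's $P_{\tmin,k}$ (Definition~\ref{def:potential}), and your verification that $\Phi$ moves by $\pm 1$ exactly under level-$(k-1)$ or level-$k$ flips replaces the six-case check of Lemma~\ref{lemma:potential}. Your computation of $\Phi(\tmax)-\Phi(\tmin)=2k(n-k)-n$ and of $f_\ell=\ell(n-\ell-1)$ along a maximal chain both agree with the paper's.

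The gap is in the upper bound for an arbitrary pair $\T_1,\T_2$. You correctly identify the obstacle---a naive detour through $\tmin$ or $\tmax$ costs twice the target---but your proposed fix, that level-$k$ lifting hypertriangulations admit a $\Phi$-monotone geodesic of length exactly $|\Phi(\T_1)-\Phi(\T_2)|$, is false. Already for $n=4$, $k=2$ there are eight lifting hypertriangulations (the eight rhombus tilings of the octagon, since there are no levels other than $1$ and $2$ to quotient by) while $\Phi$ takes only $2k(n-k)-n+1=5$ values; by pigeonhole two distinct ones share a $\Phi$-value, yet their graph distance is positive, not $0$. So $\Phi$ cannot serve as an exact distance, and no amount of insertion--deletion bookkeeping on the hypertriangulation side will repair this.

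The paper closes the gap with a different pigeonhole. First, Lemma~\ref{lemma: max chain} shows that every \emph{regular} tiling lies on a maximal chain of regular tilings from $\tmin$ to $\tmax$: choose generic height vectors and check that the piecewise-linear path $h_{\min}\to h\to h_{\max}$ crosses each hyperplane of $\hv$ exactly once. Given two regular tilings $\T_1,\T_2$, concatenate their respective maximal chains into a closed walk $\tmin\to\T_1\to\tmax\to\T_2\to\tmin$ of total length $2\binom{n}{3}$. Each of the two chains contributes exactly $f_{k-1}+f_k=2k(n-k)-n$ flips at levels $k-1$ and $k$ (your own count, or the paper's, which reads it off as the number of triangles in the $k$th slice of the corresponding element of $Z(n,3)$), so the full cycle carries $2\bigl(2k(n-k)-n\bigr)$ such flips. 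The two arcs of the cycle between $\T_1$ and $\T_2$ partition this total, hence one of them uses at most $2k(n-k)-n$ of them; that arc is the required path in the $1$-skeleton of $\sig{k}+\sig{k-1}$.
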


\begin{theorem}
\label{thm:diam_k}
For $d=2$ and for all $k\in[n-d]$, $\delta (\sig{k})=k(n-k-1)$.
\end{theorem}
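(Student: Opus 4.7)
The plan is to establish $\delta(\sig{k}) = k(n-k-1)$ by induction on $k$, using Theorem~\ref{thm:diam_k+k-1} as the main input. The base case $k = 1$ is classical: by Theorem~\ref{thm:GPWmain}(1), $\sig{1}$ is, up to translation and dilation, the GKZ secondary polytope of $n$ distinct collinear points, which is combinatorially the $(n-2)$-dimensional cube $[0,1]^{n-2}$ and therefore has diameter $n - 2 = 1 \cdot (n - 1 - 1)$. The symmetry $\sig{k} \shift -\sig{n-k-1}$ from Theorem~\ref{thm:GPWmain}(2) yields the boundary case $k = n-2$ for free and reduces the general problem to $k \leq \lfloor (n-1)/2 \rfloor$.

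The inductive step reduces to the additive identity
\[
\delta(\sig{k}) + \delta(\sig{k-1}) = \delta(\sig{k} + \sig{k-1}).
\]
Granting this identity, combining it with Theorem~\ref{thm:diam_k+k-1} and the inductive hypothesis $\delta(\sig{k-1}) = (k-1)(n-k)$ yields
\[
\delta(\sig{k}) = [2k(n-k) - n] - (k-1)(n-k) = k(n-k-1),
\]
as required. I would prove the identity in two directions. For the upper bound $\delta(\sig{k}+\sig{k-1}) \leq \delta(\sig{k}) + \delta(\sig{k-1})$, I use that vertices of the Minkowski sum correspond to compatible pairs $(v, w)$ with $v$ a vertex of $\sig{k}$ and $w$ a vertex of $\sig{k-1}$, and each edge represents a single level-$k$ or level-$(k-1)$ flip; I concatenate a flip path in $\sig{k}$ from $v_1$ to $v_2$ with one in $\sig{k-1}$ from $w_1$ to $w_2$, verifying that joint compatibility of the two equivalence classes is preserved along the way. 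For the reverse inequality, I exhibit extremal vertex pairs that simultaneously realize the diameters of both summands, and construct a Lipschitz-$1$ linear functional on the $1$-skeleton of the Minkowski sum whose values differ on these pairs by $\delta(\sig{k}) + \delta(\sig{k-1})$.

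The main obstacle is the additive identity itself: Minkowski-sum diameters are not generally additive, so the argument must exploit specific structure of higher secondary polytopes—namely, that level-$k$ and level-$(k-1)$ flips modify the coordinates $\widehat{\vertex}_k$ and $\widehat{\vertex}_{k-1}$ of tilings in essentially independent ways, as controlled by the higher Bruhat order $B(n, 1)$ and the strong-separation conditions for lifting hypertriangulations from Subsection~\ref{subsec:hyper}. An alternative route would avoid Theorem~\ref{thm:diam_k+k-1} entirely and prove both bounds for $\delta(\sig{k})$ directly, via a Lipschitz-$1$ potential function on the $1$-skeleton of $\sig{k}$ paired with a matching path construction between two carefully chosen $k$-equivalence classes; this bypasses the Minkowski sum but runs into analogous combinatorial difficulties in characterizing $k$-equivalence for $d=2$.
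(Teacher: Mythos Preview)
Your inductive scheme hinges entirely on the additive identity
\[
\delta(\sig{k}) + \delta(\sig{k-1}) \;=\; \delta(\sig{k}+\sig{k-1}),
\]
and neither direction is actually established in the proposal.  For the inequality $\delta(\sig{k}+\sig{k-1}) \le \delta(\sig{k}) + \delta(\sig{k-1})$ you propose to concatenate a path in $\sig{k}$ with a path in $\sig{k-1}$, passing through an intermediate vertex $(v_2,w_1)$.  But a sum $v+w$ of vertices is a vertex of the Minkowski sum only when the normal cones of $v$ and $w$ intersect in full dimension, and edges of $P+Q$ require an edge of one summand and a \emph{compatible} vertex of the other.  There is no reason the intermediate pair should be compatible, nor that each edge of a geodesic in $\sig{k}$ is compatible with the fixed vertex $w_1$ of $\sig{k-1}$.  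Indeed the inequality is false for general polytopes (two regular pentagons rotated by $\pi/5$ sum to a decagon: $5 > 2+2$), so you genuinely need a structural argument specific to these polytopes, and ``verifying that joint compatibility is preserved along the way'' is precisely the missing content.  The reverse inequality is equally problematic: exhibiting a pair that simultaneously realises both summand diameters presupposes you already know $\delta(\sig{k})$, which is circular.

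The paper follows what you call the ``alternative route,'' and it is worth noting how close its upper-bound argument is to your recursion---just one level down.  For the lower bound it restricts the potential from the $\sig{k}+\sig{k-1}$ proof to its level-$k$ half, $\widetilde P_{\T,k}(\T') = |\T_k^+\setminus(\T')_k^+| - |(\T')_k^+\setminus\T_k^+|$, which is Lipschitz-$1$ under level-$k$ flips and separates $\tmin$ from $\tmax$ by $k(n-k-1)$.  For the upper bound it counts, for any maximal chain in $B(n,2)$, the number $W(k)$ of level-$k$ flips as the number of white triangles in the $k\ts{th}$ cross-section of the corresponding element of $Z(n,3)$; since black triangles at level $k$ are white triangles at level $k-1$, one gets $W(k)+W(k-1)=2k(n-k)-n$ and hence $W(k)=k(n-k-1)$.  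This is exactly your inductive identity, but proved for flip counts along a single maximal chain rather than for diameters of Minkowski sums---which sidesteps the compatibility obstruction entirely.
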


The four subsections below establish sharp bounds for $\delta (\widehat{\Sigma}_k+\widehat{\Sigma}_{k-1})$ (Propositions~\ref{prop:lbk+k-1}~and~\ref{prop:ubk+k-1}), and sharp bounds for $\delta (\sig{k})$ (Propositions~\ref{prop:lbk}~and~\ref{prop:ubk}), implying Theorems~\ref{thm:diam_k+k-1}~and~\ref{thm:diam_k}.

\subsection{Lower bound for the diameter of $\sig{k}+\sig{k-1}$}\label{lbk+k-1}

In this subsection we will define and develop a potential function that lower bounds the diameter of $\delta(\sig{k}+\sig{k-1})$. For the whole subsection, we fix some zonotope $\mathcal{Z}_{\mathcal{V}}$ and when will refer to tilings of this zonotope without specifying the zonotope $\mathcal{Z}_{\mathcal{V}}$ explicitly. 

We will make use of two particular tilings. The first, $\tmin$, the minimal element in the order 2 Bruhat Order and the second, $\tmax$ the maximal element in the order 2 Bruhat order. 

\begin{figure}[h!]
    \centering
    \includegraphics[width=11cm]{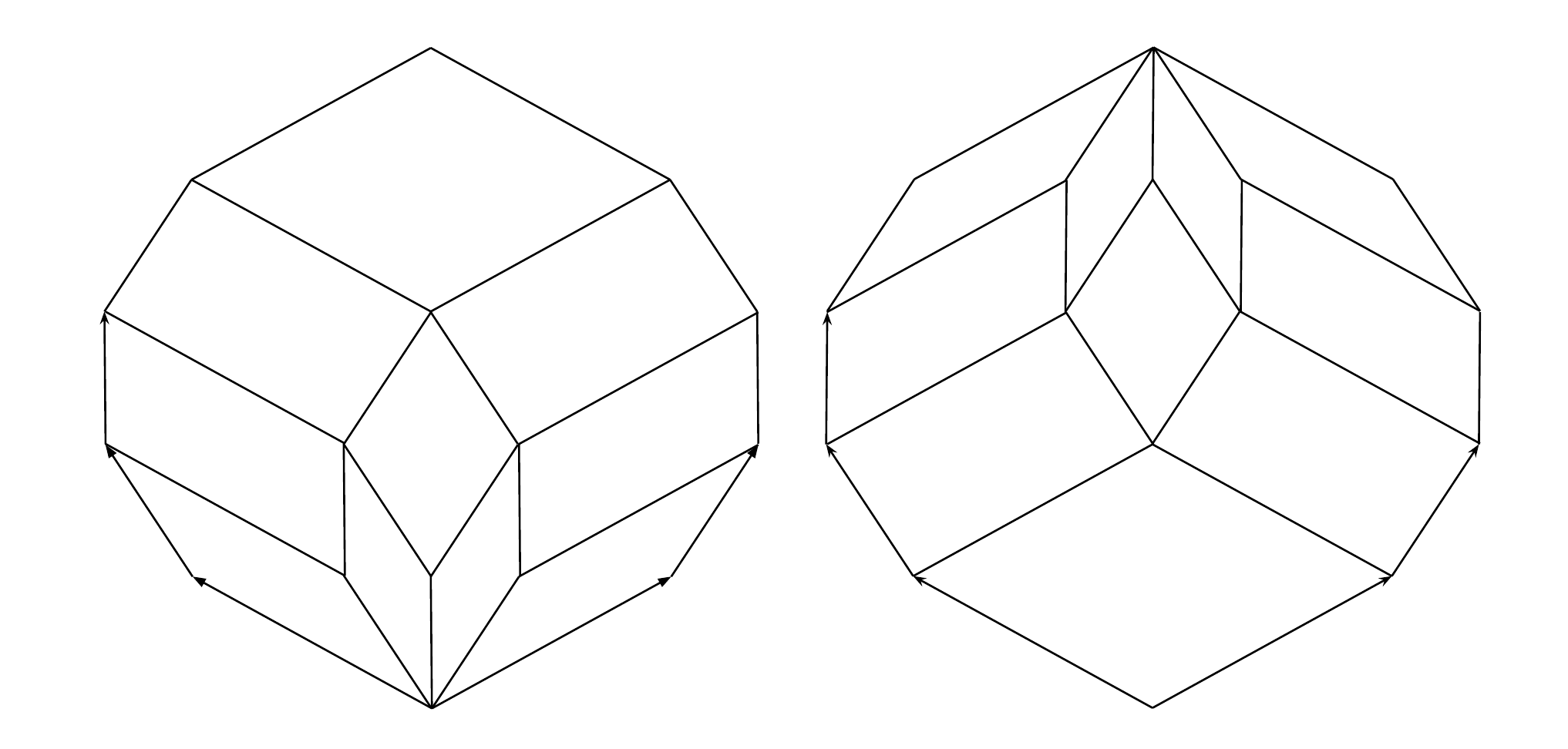}
    \caption{The maximum and minimum tilings for $n=5$.}
    \label{fig:minandmaxtilingsl}
\end{figure}

First, we show that $\tmax$ and $\tmin$ are actually represented by vertices of $\sig{k}$ for all $k$. Indeed,
\begin{lemma}\label{lemma:maxminregularity}
The tilings $\tmin$ and $\tmax$ are regular.
\end{lemma}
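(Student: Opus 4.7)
The plan is to exhibit explicit generic height vectors $h^+, h^- \in \R^n$ whose induced regular tilings $\T_{h^+}, \T_{h^-}$ turn out to be $\tmax$ and $\tmin$ (in some order). Since the points of $\A$ are distinct, I order them $a_1 < a_2 < \ldots < a_n$. By the discussion in Section 2.3, every circuit of $\V = \{(a_i,1)\}$ then has the form $C = (\{i,k\},\{j\})$ with $i<j<k$, and one can take
\[
\alpha(C) = (a_k - a_j)\,e_i - (a_k - a_i)\,e_j + (a_j - a_i)\,e_k.
\]

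Take $h^\pm_i := \pm a_i^2$. The key computation is
\[
\inner{h^+}{\alpha(C)} = (a_k - a_j)\,a_i^2 - (a_k - a_i)\,a_j^2 + (a_j - a_i)\,a_k^2 = (a_k - a_j)(a_j - a_i)(a_k - a_i),
\]
which is strictly positive for $i<j<k$; this is just strict convexity of $t\mapsto t^2$ repackaged. Consequently $\sigma_{h^+}(C) = +1$ and $\sigma_{h^-}(C) = -1$ for every circuit $C$. In particular $h^\pm \notin \hv$, so $h^\pm$ are generic, and the regular tilings $\T_{h^\pm}$ are well defined with constant sign functions $\sigma_{\T_{h^\pm}} \equiv \pm 1$.

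It remains to match $\T_{h^\pm}$ with $\tmax$ and $\tmin$. Let $N_+(\T) := |\{C \in \mathcal{C_V} \mid \sigma_\T(C) = +1\}|$. Any flip changes $N_+(\T)$ by exactly one, and by Theorem~\ref{thm: bruhat}(1) the higher Bruhat order $B(n,2)$ is graded of rank $\binom{n}{3}$, which equals the total number of circuits in the cyclic configuration $\V$. Along any saturated chain from $\tmin$ to $\tmax$ in the Hasse diagram (which is the flip graph, by Theorem~\ref{thm: bruhat}(2)), each cover must be a $-\to +$ flip of a single circuit; together with the length $\binom{n}{3}$ of the chain and there being only $\binom{n}{3}$ circuits, this forces $N_+(\tmin) = 0$ and $N_+(\tmax) = \binom{n}{3}$. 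Hence $\sigma_{\tmin} \equiv -1$ and $\sigma_{\tmax} \equiv +1$. Since a tiling is determined by its sign function, we conclude $\T_{h^+} = \tmax$ and $\T_{h^-} = \tmin$, so both are regular.

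The main obstacle here is the Bruhat-order identification in the last paragraph: one must know that covers in $B(n,2)$ are monotone in $\sigma$, a fact not spelled out in the statement of Theorem~\ref{thm: bruhat} but standard in the theory of higher Bruhat orders (it is essentially the definition of the order in Manin--Schechtman / Ziegler). If one prefers to avoid this, an alternative would be to argue directly that, for the height $h^+$, the projected upper envelope of the lifted zonotope (Definition~\ref{def:reg-tiling}) contains every vertex $S\subseteq[n]$ predicted by the explicit description of $\tmax$ visible in Figure~3.1, and symmetrically for $h^-$ and $\tmin$; the computation above ensures this automatically, so the identification reduces to a one-time check on the extremal tilings.
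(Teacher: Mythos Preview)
Your proof is correct and takes a genuinely different route from the paper's. The paper builds a height vector for $\tmax$ inductively in $n$: assuming $h_1,\ldots,h_{n-1}$ already realize the extremal tiling for $n-1$ vectors, it picks $h_n$ very large so that on every level only one new tile is added, and checks directly which vertices survive. Your approach instead writes down the single closed-form height $h_i=a_i^2$, uses the circuit description of $\V$ to compute $\inner{h^+}{\alpha(C)}=(a_k-a_j)(a_j-a_i)(a_k-a_i)>0$ for every circuit $C=(\{i,k\},\{j\})$ with $i<j<k$, and then identifies $\T_{h^\pm}$ with $\tmax,\tmin$ via the rank function of $B(n,2)$.

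Your version is cleaner and more conceptual: the factored Vandermonde expression is exactly the strict convexity of $t\mapsto t^2$, and it produces a \emph{generic} $h$ for any ordering of the $a_i$ without tuning constants. The one soft spot you flag is real but minor: to conclude $\sigma_{\tmin}\equiv -1$ and $\sigma_{\tmax}\equiv +1$ you need that the rank function of $B(n,2)$ is the inversion count, equivalently that covers are $-\to +$ circuit flips. That is indeed Ziegler's definition of $B(n,d)$ (the order on admissible subsets of $\binom{[n]}{d+1}$ by single-element inclusion), so it is safe to cite, and the paper itself asserts just before the lemma that $\tmin$ and $\tmax$ orient all $\binom{n}{3}$ circuits oppositely. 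Once you have that, uniqueness of a tiling from its sign function gives $\T_{h^+}=\tmax$ and $\T_{h^-}=\tmin$, and you are done. The paper's inductive argument trades this citation for a more hands-on geometric check, at the cost of a somewhat fiddlier write-up.
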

\begin{proof}
Without loss of generality, assume that $\A$ is given by $a_i = i$. It is enough to show that $\tmax$ is regular, since if $h\in\R^n$ is the height vector for $\tmax,$ then $-h$ is a height vector that gives $\tmin$. For $n=2$, pick $h=(h_1,h_2)$ so that $h_1<h_2.$ Since the zonotope for $d=2,n=2$ is a parallelogram, this height vector gives the desired zonotope. We construct the minimal tiling on $n$ vectors from the minimal tiling on $n-1$ vectors as follows. Add a vector $(a_n,1)$ to the set $\mathcal{V}$. Assume the heights $h_1, \ldots, h_{n-1}$ are already chosen. Pick the height $h_n \in \R$ large enough so that $$\frac{h_{n-k-2}}{2}+\sum_{i=n-k-1}^{n-1}h_i<\frac{h_n}{2}$$ for all $k$.
Now, at the $k$th level, vertices between $\{n-k-2,...,n-1\}$ and $\{n-k-1,...,n\}$ do not occur. We know this gives us the minimal tiling because it contains (as a subtiling) the minimal tiling of $\mathcal{Z}_{\{v_1,\ldots,v_{n-1}\}}$ by assumption, and since we have only added one tile to every layer, while fixing the tiling of $\mathcal{Z}_{\{v_1,\ldots,v_{n-1}\}}$. 
\end{proof}

In $\tmin$, there are $k$ tiles that intersect the $k\ts{th}$ level for $k\in[n-2]$. In $\tmax$ there are $n-k$ tiles that intersect the $k\ts{th}$ level. Note $\tmin$ and $\tmax$ orient all ${n\choose 3}$ circuits of $\mathcal{V}$ differently, so they are ${n\choose 3}$ flips apart since a flip may only change the orientation of one circuit at a time. Thus, $\tmax$ and $\tmin$ are the maximum flip distance apart and are relatively easy to study; we will use these two tilings to find long paths in $\sig{k}$. We define two subclasses of tiles for a given tiling $\T$.

\begin{definition}\label{def:tiling_orientation}
Let $\T$ be an arbitrary tiling. Let $k\in\{1,...,\lfloor n-2\rfloor\}$. Define 
\begin{align*}
    \T^+_k&=\{B\mid \Pi_{A,B}\in \T, \;|A|\geq k\}\\
    \T^-_k&=\{B\mid \Pi_{A,B}\in \T, \;|A|\leq k-2\},
\end{align*}
which we refer to as the \emph{$\T_k$-positive} and \emph{$\T_k$-negative} tiles, respectively. Notice that $\T^+_k$ depends on the $k$-equivalence class and $\T^-_k$ depends on $(k-1)$-equivalence class. So, the pair $(\T^+_k,\T^-_k)$ depends on the $k$ and $(k-1)$-equivalence class; specifically, tilings that are simultaneously $k$- and $(k-1)$-equivalent have the same positive and negative tiles for a given $k$.
\end{definition}

\begin{definition}\label{def:potential}
Let $\T$ be an arbitrary tiling. For another arbitrary tiling $\T^{\prime}$, let the \emph{level-$k$ potential with respect to $\T$} be defined as $$P_{\T,k}(\T^{\prime})=(|\T^+_k\setminus (\T^{\prime})^+_k|-|(\T^{\prime})^+_k\setminus \T_k^+|) -(|\T_k^-\setminus(\T^{\prime})^-_k|-|(\T^{\prime})^-_k\setminus\T_k^-|).$$
\end{definition}

\begin{lemma}\label{lemma:potential}
Let $\T_1,\T_2$ be two zonotopal tilings that are connected by a single flip. Then, with respect to any zonotopal tiling $\T, |P_{\T,k}(\T_1)-P_{\T,k}(\T_2)| \le 1.$ 
\end{lemma}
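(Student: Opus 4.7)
The plan is to recast $P_{\T,k}(\T')$ as the difference $G(\T) - G(\T')$ of a single-tiling functional $G$, so that bounding the change of $P$ under a flip reduces to bounding the change of $G$. The functional $G$ turns out to be an integer-valued sum whose per-flip increment can be computed directly from the three tiles exchanged.

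First I would exploit the standard fact that in the generic $d=2$ case every $B \in \binom{[n]}{2}$ is a basis and appears as the $B$-set of exactly one tile in any fine zonotopal tiling of $\zono$. Consequently $\T^+_k$ and $(\T')^+_k$ are both subsets of $\binom{[n]}{2}$, and the elementary set identity $|X \setminus Y| - |Y \setminus X| = |X| - |Y|$ reduces the definition of $P_{\T,k}$ to
\[
P_{\T,k}(\T') = G(\T) - G(\T'), \qquad G(\T) := |\T^+_k| - |\T^-_k|.
\]
Letting $|A_\T(B)|$ denote the $A$-size of the unique tile of $\T$ with $B$-set $B$, this is equivalent to
\[
G(\T) = \sum_{B \in \binom{[n]}{2}} f(|A_\T(B)|), \qquad f(j) := \begin{cases} +1, & j \ge k, \\ 0, & j = k-1, \\ -1, & j \le k-2, \end{cases}
\]
so $f$ is a non-decreasing $\Z$-valued step function with unit jumps only at $j = k-1$ and $j = k$.

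Next I would apply Proposition~\cite[Proposition 5.7]{galashin2019higher} to a flip along a circuit $C = (C^+, C^-)$ with $\underline{C} = \{p,q,r\}$ and level $\ell = |A(F)|+1$. Only the three $B$-sets in $\binom{\underline{C}}{2}$ have their $A$-sizes affected, and a direct inspection of the two cases $|C^+| = 2$ and $|C^+| = 1$ shows that the multiset of those three $A$-sizes changes either from $\{\ell-1, \ell, \ell\}$ to $\{\ell-1, \ell-1, \ell\}$ or vice versa. In either case,
\[
G(\T_2) - G(\T_1) = \pm \bigl( f(\ell-1) - f(\ell) \bigr),
\]
whose absolute value is at most $1$ because $f$ has unit jumps. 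This immediately yields $|P_{\T,k}(\T_1) - P_{\T,k}(\T_2)| \le 1$.

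The main obstacle is really just the first step: recognizing that in the generic $d=2$ regime the sets $\T^+_k$ and $(\T')^+_k$ inhabit a common universe $\binom{[n]}{2}$, so that the four-term set-difference expression defining $P_{\T,k}$ collapses into the clean form $G(\T) - G(\T')$. After this reformulation the per-flip estimate is pure three-tile bookkeeping, since every flip shifts the $A$-size of exactly one $B$-set one way and of the other two the opposite way, and $f$ can change by at most $1$ per unit shift.
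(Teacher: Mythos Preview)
Your proof is correct and takes a genuinely different route from the paper's. The paper fixes the flip level to $k$ (the level $k-1$ case being symmetric), then enumerates six configurations according to which of the three tiles involved in the flip lie in $\T^+_k$, checking in each case that the four set-difference terms in the definition of $P_{\T,k}$ combine to a change of exactly $1$. Your argument instead uses the elementary identity $|X\setminus Y|-|Y\setminus X|=|X|-|Y|$ (valid because in the generic $d=2$ setting every $B\in\binom{[n]}{2}$ occurs as the $B$-set of exactly one tile) to collapse $P_{\T,k}(\T')$ to $G(\T)-G(\T')$ with $G(\T)=|\T^+_k|-|\T^-_k|$. This makes $P_{\T,k}(\T_1)-P_{\T,k}(\T_2)=G(\T_2)-G(\T_1)$, which is visibly independent of the reference tiling $\T$ and is governed by the single step function $f$. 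Your approach is shorter, avoids the six-case check entirely, and exposes extra structure (the $\T$-independence and the explicit value $\pm(f(\ell)-f(\ell-1))$, which is $0$ unless $\ell\in\{k-1,k\}$); the paper's approach is more hands-on and does not rely on the ``each $B$ appears once'' fact, though that fact is standard in the generic regime under consideration.
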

\begin{proof}
If the zonotopal flip occurs above level $k$ or below level $k-1$, the potential does not change. The cases of level $k$ and $k-1$ are similar, so we assume that the flip happens on level $k$; then $|(\T_1)_k^-\setminus\T^-_k|=|(\T_2)_k^-\setminus\T^-_k|$ and $|\T^-_k\setminus(\T_1)_k^-|=|\T\setminus (\T_2)_k^-|$. Furthermore, we will only consider flips along circuits so that $|(\T_2)^+_k|-|(\T_1)^+_k|=1$ since flips along circuits so that $|(\T_2)^+_k|-|(\T_1)^+_k|=-1$ are simply the reverse of the flips we consider. And thus, have potential of the same magnitude but opposite sign. So, $P_{\T,k}(\T_1)-P_{\T,k}(\T_2)$ is reduced to $|\T^+_k\setminus(\T_1)_k^+|-|\T^+_k\setminus (\T_2)_k^+|-(|(\T_1)_k^+\setminus\T^+_k|-|(\T_2)_k^+\setminus\T^+_k|).$ 

For the purposes of calculating $|\T^+_k\setminus(\T_1)_k^+|-|\T^+_k\setminus (\T_2)_k^+|$ and $|(\T_1)_k^+\setminus\T^+_k|-|(\T_2)_k^+\setminus\T^+_k|$, we only need to know when a tile is $\T^+_k$ or when it is not. We also note that since the potential is not effected by the particular labels in the set $A$, we can consider circuit combinations up to vertical symmetry. The 6 cases are described below:

\begin{figure}[h!]
    \centering
    \includegraphics[width=13cm]{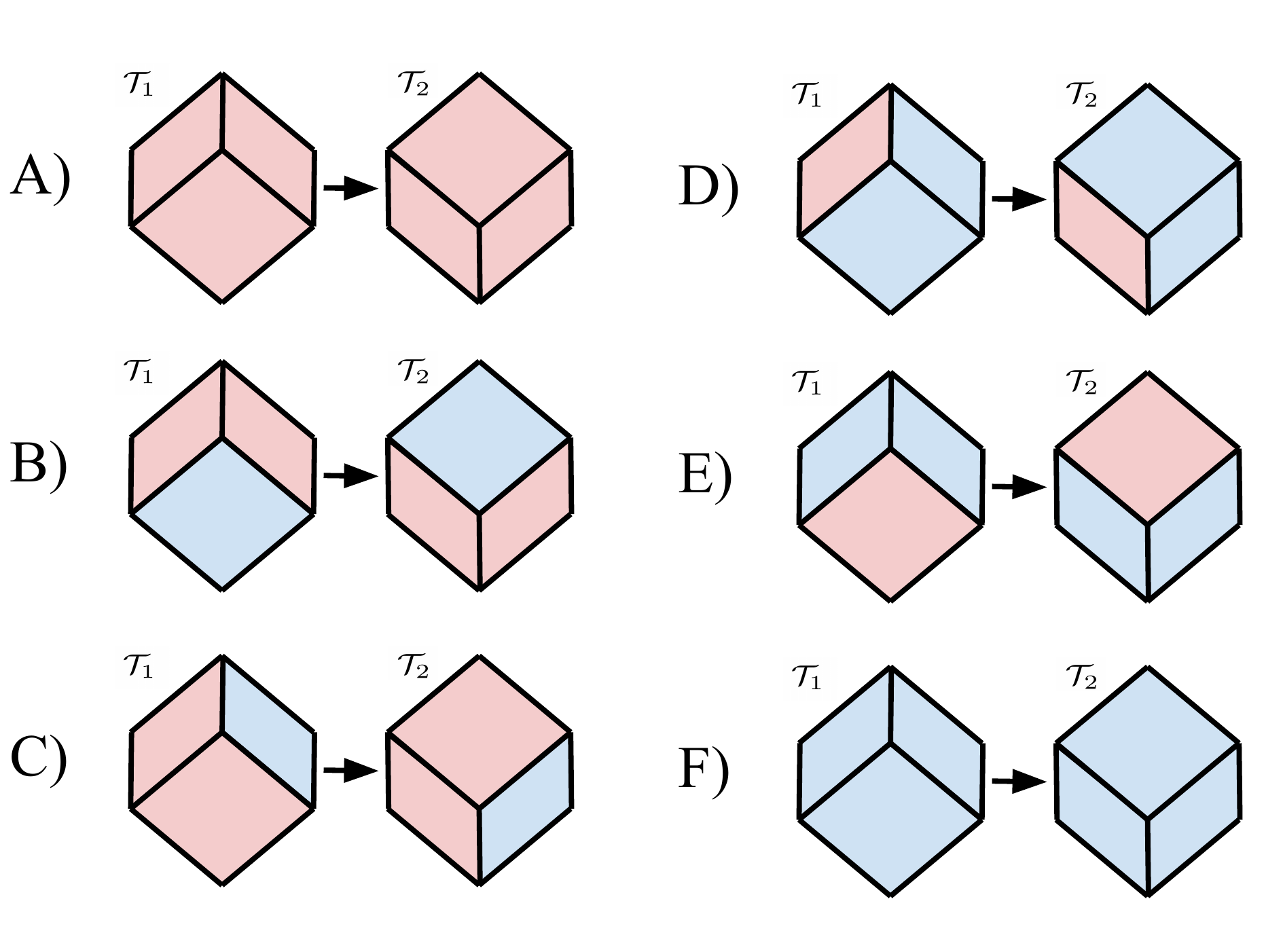}
    \caption{The 6 cases to consider: red tiles represent positively oriented tiles and yellow tiles represent not positively oriented tiles (both with respect to the same tiling $\T$).}
    \label{fig:potenital_cases}
\end{figure}

\begin{enumerate}
    \item[\bf A] Since this flip is only between positive tiles, $|(\T_1)_k^+\setminus\T^+_k|-|(\T_2)_k^+\setminus\T^+_k|=0.$ Furthermore, $|\T^+_k\setminus(\T_1)_k^+|-|\T^+_k\setminus (\T_2)_k^+|=1$ because $|(\T_1)_k^+|>|(\T_2)_k^+|$. Thus,  $|P_{\T}(\T_1)^k-P_{\T,k}(\T_2)|=1$.

    \item[\bf B] A non positive tile with respect to $\T^k$ and $(\T_1)^k$ is positive with respect to $(\T_2)^k$ so $|(\T_1)_k^+\setminus\T^+_k|-|(\T_2)_k^+\setminus\T^+_k|=-1.$ Since the two positively oriented tiles in $\T^k$ and $(\T_1)^k$ are negatively oriented in $(\T_2)_k, |\T^+_k\setminus(\T_1)_k^+|-|\T^+_k\setminus (\T_2)_k^+|=2.$ Thus, $|P_{\T,k}(\T_1)-P_{\T,k}(\T_2)|=1$.

    \item[\bf C]  And, $(\T_1)_k$ orients one tile positively that $\T_k$ does not while $(\T_2)_k$ does not orient any $\T_k-$non positive tiles positively. So, $|(\T_1)_k^+\setminus\T^+_k|-|(\T_2)_k^+\setminus\T^+_k|=-2.$ And, $(\T_1)_k$ and $(\T_2)_k$ orient one $\T_k$-positive tile positively, so $|\T^+_k\setminus(\T_1)_k^+|-|\T^+_k\setminus (\T_2)_k^+|=-1.$ And thus, $|P_{\T,k}(\T_1)-P_{\T,k}(\T_2)|=1$.

    \item[\bf D] There is exactly one $\T_k$-non positive tile oriented positively by $(\T_1)_k$ and $(\T_2)_k$ so $|(\T_1)_k^+\setminus\T^+_k|-|(\T_2)_k^+\setminus\T^+_k|=0.$ However, $(\T_1)_k$ orients one $\T_k-$positive tile positively while $(\T_2)_k$ does not orient any positively, so $|\T^+_k\setminus(\T_1)_k^+|-|\T^+_k\setminus (\T_2)_k^+|=1.$ Thus, $|P_{\T,k}(\T_1)-P_{\T,k}(\T_2)|=1$.

    \item[\bf E] $(\T_1)_k$ orients two $\T$-non positive tiles positively while $(\T_2)_k$ orients a $\T_k-$positive tile positively, so $|(\T_1)_k^+\setminus\T^+_k|-|(\T_2)_k^+\setminus\T^+_k|=2.$ One $\T_k-$positive tile is oriented positively by $(\T_2)_k$ so $|\T^+_k\setminus(\T_1)_k^+|-|\T^+_k\setminus (\T_2)_k^+|=1.$ Thus, $|P_{\T,k}(\T_1)-P_{\T,k}(\T_2)|=1$.

    \item[\bf F] Two $\T_k-$non positive tiles are replaced by 1 $\T$-non positive tile, so $|(\T_1)_k^+\setminus\T^+_k|-|(\T_2)_k^+\setminus\T^+_k|=1$. Since this flip only concerns $\T$-non positive tiles,
$|\T\setminus(\T_1)_k^+|-|\T^+_k\setminus (\T_2)_k^+|=0.$ So, $|P_{\T,k}(\T_1)-P_{\T,k}(\T_2)|=1$.
\end{enumerate}
\end{proof}

\begin{proposition}
\label{prop:lbk+k-1}
If $d=2,$ then for all $n$ and $k\in [n-d], \delta(\widehat{\Sigma}_{k-1}+\widehat{\Sigma}_{k})\geq 2k(n-k)-n$.
\end{proposition}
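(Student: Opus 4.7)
The plan is to apply Lemma~\ref{lemma:potential} to the pair $(\tmin,\tmax)$ and compute the resulting potential difference explicitly. First, I would observe that an edge of the $1$-skeleton of $\sig{k}+\sig{k-1}$ always corresponds to a single zonotopal flip at level $k$ or level $k-1$: a flip at any other level preserves both the $k$- and $(k-1)$-equivalence classes, hence fixes the vertex of $\sig{k}+\sig{k-1}$ that a tiling represents. Combined with Lemma~\ref{lemma:potential}, and using the observation from its proof that flips outside levels $k$ and $k-1$ leave $P_{\tmin,k}$ unchanged, this gives
\[
\delta(\sig{k}+\sig{k-1})\;\geq\;\bigl|P_{\tmin,k}(\tmin)-P_{\tmin,k}(\tmax)\bigr|\;=\;\bigl|P_{\tmin,k}(\tmax)\bigr|.
\]

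Next, I would simplify $P_{\tmin,k}(\tmax)$. The elementary identity $|X\setminus Y|-|Y\setminus X|=|X|-|Y|$, applied to both the positive and negative parts of the potential, shows that
\[
P_{\T,k}(\T')=\bigl(|\T^+_k|-|\T^-_k|\bigr)-\bigl(|(\T')^+_k|-|(\T')^-_k|\bigr),
\]
so the computation reduces to determining the four cardinalities $|\tmin^\pm_k|$ and $|\tmax^\pm_k|$.

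Finally, I would pin down the combinatorial structure of the two extreme tilings. Using the inductive height-vector construction from the proof of Lemma~\ref{lemma:maxminregularity} (and its mirror image for $\tmax$), one verifies that the tile $\Pi_{A,\{i,j\}}\in\tmin$ has $|A|=n-1-(j-i)$ and the tile $\Pi_{A,\{i,j\}}\in\tmax$ has $|A|=(j-i)-1$, so in both extreme tilings the $|A|$-label depends only on the gap $g=j-i$. Since the number of pairs $\{i,j\}\subset[n]$ with fixed gap $g$ equals $n-g$, a direct summation yields
\[
|\tmin^{+}_k|=\binom{n}{2}-\binom{k+1}{2},\;\; |\tmin^{-}_k|=\binom{k}{2},\;\; |\tmax^{+}_k|=\binom{n-k}{2},\;\; |\tmax^{-}_k|=\binom{n}{2}-\binom{n-k+1}{2}.
\]
Substituting into the simplified potential and using the identities $\binom{k+1}{2}+\binom{k}{2}=k^2$ and $\binom{n-k}{2}+\binom{n-k+1}{2}=(n-k)^2$, I obtain
\[
P_{\tmin,k}(\tmax)=n(n-1)-k^2-(n-k)^2=2k(n-k)-n,
\]
which is the desired lower bound.

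The hardest step is the gap-based description of the $|A|$-labels for $\tmin$ and $\tmax$; once that structural claim is established, the rest of the argument reduces to a short identity on binomial coefficients. Everything else is a careful bookkeeping consequence of Lemma~\ref{lemma:potential} and the interpretation of edges of $\sig{k}+\sig{k-1}$ as flips at levels $k$ and $k-1$.
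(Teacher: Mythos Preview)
Your proof is correct and follows essentially the same approach as the paper: apply Lemma~\ref{lemma:potential} to the pair $(\tmin,\tmax)$ and compute the resulting potential explicitly. The one genuine streamlining is your use of the identity $|X\setminus Y|-|Y\setminus X|=|X|-|Y|$, which reduces the computation to the four cardinalities $|\tmin^{\pm}_k|,|\tmax^{\pm}_k|$ and handles all $k\in[n-2]$ uniformly; the paper instead argues disjointness of $(\tmin)^{+}_k$ and $(\tmax)^{+}_k$ for $k\le n/2$ and then invokes the symmetry $\sig{k}\shift-\sig{n-k-1}$ for $k>n/2$. Your gap-based description of the $|A|$-labels in $\tmin$ and $\tmax$ is also slightly more explicit than the paper's row-by-row count, though both yield the same level distributions.
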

\begin{proof}
Let $\tmax$ and $\tmin$ be defined as above. Recall that $\tmax$ can be obtained from $\tmin$ by negating all circuits in $\tmax$. Additionally, the path between $\tmin$ and $\tmax$ can be taken to be along all regular tilings by \cite[Lemma~6.5]{galashin2019higher} and Lemma~\ref{lemma:maxminregularity}. We wish to calculate the potential difference $P_{\T,k}$ between $\tmax$ and $\tmin$. 
Let $k\in\{1,...,\lfloor \frac{n}{2}\rfloor\}$. Define:
\begin{align*}
    (\ttmin)^+_k&=\{B\mid \Pi_{A,B}\in \T_{\min}, \;|A|\geq n-k-1\}\\
    (\ttmin)^-_k&=\{B\mid \Pi_{A,B}\in \T_{\min}, \;|A|\leq k-2\}\\
    (\ttmax)^+_k&=\{B\mid \Pi_{A,B}\in \T_{\max}, \;|A|\geq n-k-1\}\\
    (\ttmax)^-_k&=\{B\mid \Pi_{A,B}\in \T_{\max}, \;|A|\leq k-2\}\\
\end{align*}
\begin{figure}[h]
    \centering
    \includegraphics[width=15cm]{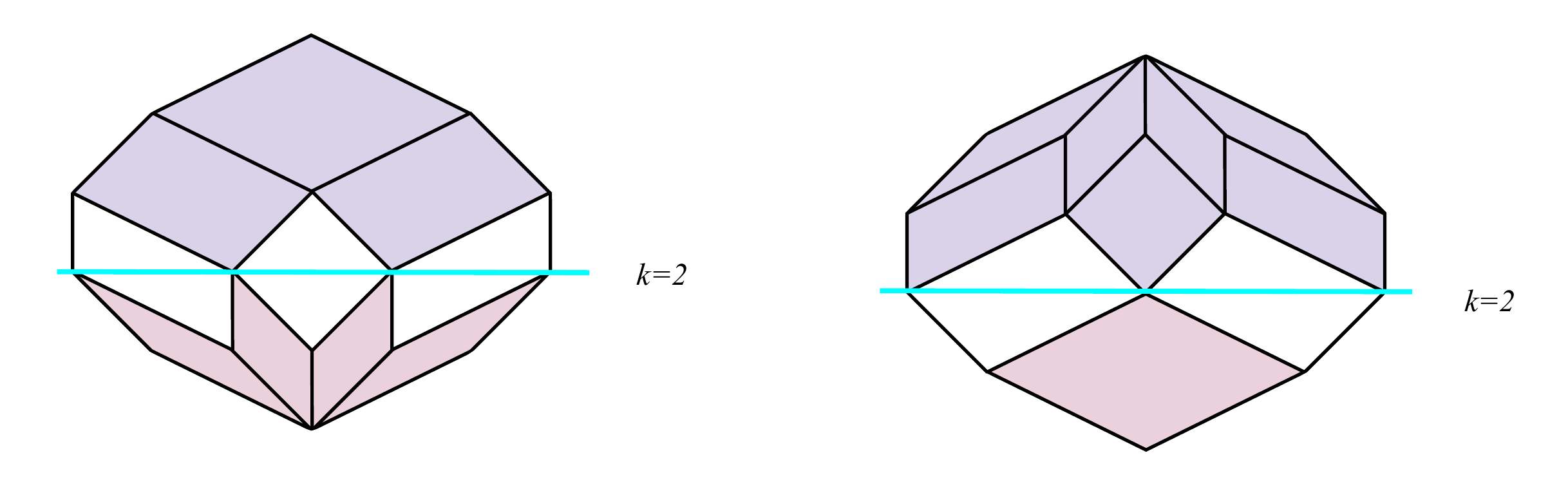}
    \caption{An example of the $(\ttmin)^+_2,(\ttmin)^-_2,(\ttmax)^+_2,(\ttmax)^-_2$ when $n=5$.}
\end{figure}

A calculation of the number of tiles in each row of $\T_{\max}$ and $\T_{\min}$ shows:
\begin{equation*}
\begin{aligned}[c]
|(\ttmin)^+_k| &= \sum_{i=0}^{k-1}n-i-1 \\
&= nk -\frac{k(k+1)}{2}\\
    |(\ttmax)^+_k| &= \sum_{i=0}^{k-1}i+1\\
    &= \frac{k(k+1)}{2}.
\end{aligned}\hspace{1cm}
\begin{aligned}[c]
    |(\ttmin)^-_k| &= \sum_{i=0}^{k-2}i+1 \\
&= \frac{k(k-1)}{2}\\
    |(\ttmax)^-_k| &= \sum_{i=0}^{k-2}n-i-1\\
    &= n(k-1)-\frac{k(k-1)}{2}.
\end{aligned}
\end{equation*}

\noindent Because $k\le \frac{n}{2},(\ttmin)^+_k\cap (\ttmax)^+_k=\emptyset$ and $(\ttmax)^-_k\cap (\ttmax)^-_k=\emptyset$. Thus, 
\begin{align*}
    |P_{(\tmin),k}(\tmax)-P_{(\tmin),k}(\tmin)| &=|(\tmin)^{ +}_k\setminus (\tmax)^+_k|-|(\tmax)^+_k\setminus (\tmin)^{ +}_k|\\
        &\hspace{5mm}-|(\tmin)^{ -}_k\setminus (\tmin)^-_k| +|(\tmin)^-_k\setminus (\tmin)^{-}_k|\\
        &\hspace{5mm}-\Big(|(\tmin)^+_k\setminus (\tmin)^+_k|-|(\tmin)^+_k\setminus (\tmin)^+_k|\\
        &\hspace{5mm}-|(\tmin)^-_k\setminus (\tmin)^-_k|+|(\tmin)^-_k\setminus (\tmin)^-_k|\Big)\\
    &= |(\ttmin)^+_k|-|(\ttmax)^+_k|+|(\ttmin)^-_k|-|(\ttmax)^-_k|\\
    &= -(nk -\frac{k(k+1)}{2})-\frac{k(k+1)}{2}+\frac{k(k-1)}{2}\\&\hspace{5mm}-(n(k-1)-\frac{k(k-1)}{2})\\
    &= k(k+1)+k(k-1)-nk-n(k-1)\\
    &= 2k(n-k)-n.
\end{align*}

 When $d=2,\widehat{\Sigma}_k \shift -\widehat{\Sigma}_{n-k-1}$ and $\widehat{\Sigma}_{k-1} \shift -\widehat{\Sigma}_{n-k}$. So, when $k<n/2, \widehat{\Sigma}_{k-1}+\widehat{\Sigma}_{k}\shift -\Big( \widehat{\Sigma}_{n-k}+\widehat{\Sigma}_{n-k-1}\Big)$. And $\delta (\widehat{\Sigma}_{k-1}+\widehat{\Sigma}_{k})\shift -\Big( \widehat{\Sigma}_{n-k}+\widehat{\Sigma}_{n-k-1}\Big)\geq n(2(n-k)-1)-2(n-k)^2$, so $\widehat{\Sigma}_{k-1}+\widehat{\Sigma}_{k}\geq n(2(n-k)-1)-2(n-k)^2=2k(n-k)-n. $
\end{proof}

\subsection{Upper bound for diameter  of $\sig{k}+\sig{k-1}$}

In this subsection, we show that the lower bound for $\delta(\sig{k}+\sig{k-1})$ obtained in the previous subsection is sharp.

\begin{lemma}
\label{lemma: max chain}
Any regular tiling $\T$ is contained in a path of \emph{regular tilings} of length $\binom{n}{3}$ from $\tmin$ to $\tmax$.
\end{lemma}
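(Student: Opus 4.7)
The plan is to exploit the hyperplane-arrangement picture of regular tilings. Since $\T$ is regular, I will pick a generic height vector $h \in \R^n$ with $\T_h = \T$; by Lemma~\ref{lemma:maxminregularity}, I will also pick a generic $h_{\min}$ with $\T_{h_{\min}} = \tmin$ and set $h_{\max} = -h_{\min}$, so that $\T_{h_{\max}} = \tmax$ (as observed in the proof of Lemma~\ref{lemma:maxminregularity}).

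Next I will form the concatenation of the two straight-line segments $[h_{\min}, h]$ and $[h, h_{\max}]$ in $\R^n$. After a small generic perturbation of the three endpoints within their respective chambers of $\hv$ (which leaves $\tmin$, $\T$, $\tmax$ unchanged), each segment will avoid all codimension-$\ge 2$ strata of $\hv$ and cross the hyperplanes one at a time. By the correspondence between line crossings and flips recalled at the end of Section~2.3, each such crossing corresponds to a single flip between two regular tilings; so the concatenated path realizes a sequence of regular tilings going from $\tmin$ through $\T$ to $\tmax$.

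To count the length, I will use that the $\binom{n}{3}$ hyperplanes of $\hv$ are in bijection (up to sign) with the circuits of $\V$. Since $\tmin$ and $\tmax$ disagree on every circuit, every such hyperplane separates $h_{\min}$ from $h_{\max}$. For any fixed hyperplane $H$, the point $h$ lies on exactly one side of $H$, so exactly one of the two segments $[h_{\min}, h]$, $[h, h_{\max}]$ crosses $H$. Summing over all $\binom{n}{3}$ hyperplanes, the concatenated path has length exactly $\binom{n}{3}$, as required.

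The only technicality is the genericity step: I must guarantee that after perturbation the two line segments cross the hyperplanes of $\hv$ transversally and one at a time. This is routine, since the set of ``bad'' triples of endpoints is a finite union of proper algebraic subsets of $\R^n \times \R^n \times \R^n$ and hence avoidable by an arbitrarily small perturbation; it is, however, essentially the only place in the argument where care is required.
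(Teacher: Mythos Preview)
Your proposal is correct and is essentially the same argument as the paper's: both take a piecewise-linear path $h_{\min} \to h \to h_{\max}$ in the complement of $\hv$, argue that each segment crosses any hyperplane at most once, and then show that the total number of crossings is exactly $\binom{n}{3}$ because $h_{\min}$ and $h_{\max}$ lie on opposite sides of every hyperplane. Your counting step (``$h$ lies on one side of $H$, so exactly one of the two segments crosses $H$'') is a slightly cleaner phrasing of the paper's contradiction argument, and your explicit mention of perturbing to avoid non-transverse crossings makes precise what the paper handles in a single clause.
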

\begin{proof}
We can pick $h_{\min}$, $h_{\max}$ and $h$ so that $\T_{h_{\min}}=\tmin$,  $\T_{h_{\max}}=\tmax$, and $\T_h=\T$, and so that the following path in $\mathbb{R}$ passes through at most one hyperplane in $\hv$ at a time: \[
    h(t)=
    \begin{cases}
    (1-t)h_{\min}+th & \text{if } t\in[0,1]\\
    (2-t)h+(t-1)h_{\max} & \text{if } t \in (1,2]
    \end{cases}
\]

Clearly each of the two piecewise-linear pieces of this path can pass through a given hyperplane in $\hv$ at most once.  Assume there exists $t_1\in[0,1],t_2\in(1,2]$, and a circuit $C$ such that $h(t_1)\cdot\alpha(C)=h(t_2)\cdot\alpha(C)=0$.  Expanding this expression, we get the following:
$$
h_{\min}\cdot\alpha(C)=\frac{t_1}{t_1-1}(h\cdot\alpha(C)),
$$
$$
h_{\max}\cdot\alpha(C)=\frac{t_2-1}{t_2}(h\cdot\alpha(C)).
$$
Both $\frac{t_1}{t_1-1}<0$ and $\frac{t_2-1}{t_2}<0$, so $\tmin$ and $\tmax$ have the same circuit orientation for $C$, yielding a contradiction.  Therefore, the path $h(t)$ passes through each of $\binom{n}{3}$ hyperplanes in $\hv$ exactly once, and yields a path of length $\binom{n}{3}$ from $\tmin$ to $\tmax$ which passes through $\T$.
\end{proof}

\begin{proposition}
\label{prop:ubk+k-1}
For all $k\in[n-d]$, $\delta (\widehat{\Sigma}_k+\widehat{\Sigma}_{k-1}) \le 2k(n-k)-n$.
\end{proposition}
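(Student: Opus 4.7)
The plan is to show that, for any two regular tilings $\T_1, \T_2$, one can connect them by a path of regular tilings that uses at most $2k(n-k)-n$ flips at levels $k$ or $k-1$. Two regular tilings determine the same vertex of $\sig{k}+\sig{k-1}$ exactly when they are connected by flips of levels other than $k, k-1$, so the distance between the corresponding vertices is bounded by the number of level-$k$ or level-$(k-1)$ flips in any regular-tiling path between them; the upper bound on the diameter will follow.

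The central ingredient will be a directional refinement of Lemma~\ref{lemma:potential}. Writing $f(\T) := |\T_k^+| - |\T_k^-|$ (so that $P_{\T,k}(\T') = f(\T) - f(\T')$ and $f(\tmin) - f(\tmax) = 2k(n-k) - n$, as in the computation in the proof of Proposition~\ref{prop:lbk+k-1}), the claim is that every \emph{upward} flip in the higher Bruhat order $B(n,2)$ changes $f$ by $-1$ when the flip is at level $k$ or $k-1$, and by $0$ otherwise. For $d=2$ generic every circuit has the canonical form $C=\{i,j,k\}$ with $a_i<a_j<a_k$ and $C^+=\{j\}$, $C^-=\{i,k\}$. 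Applying the explicit flip formula to the upward direction, an upward flip at level $l$ adds one tile at level $l$ with basis $\{i,k\}$ and two tiles at level $l-1$ with bases $\{i,j\}, \{j,k\}$, while removing one tile at level $l-1$ with basis $\{i,k\}$ and two tiles at level $l$ with bases $\{i,j\}, \{j,k\}$. Net: the tile count at level $l$ drops by $1$ and at level $l-1$ rises by $1$. A short case check on where $l$ and $l-1$ sit relative to $k$ then yields the directional claim about $f$. Consequently, every maximal chain from $\tmin$ to $\tmax$ in $B(n,2)$ contains \emph{exactly} $2k(n-k)-n$ flips at levels $k$ or $k-1$, and this upper bound matches the lower bound from Proposition~\ref{prop:lbk+k-1}.

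Given this, the proof finishes as follows. By Lemma~\ref{lemma: max chain}, for each $i \in \{1,2\}$ there is a regular-tiling chain $\tmin \to \T_i \to \tmax$; let $a_i$ be the number of level-$k$ or level-$(k-1)$ flips in the $\tmin \to \T_i$ portion, so the $\T_i \to \tmax$ portion contains $2k(n-k)-n-a_i$ such flips. The two regular-tiling paths $\T_1 \to \tmin \to \T_2$ and $\T_1 \to \tmax \to \T_2$ then have level-$k$/$(k-1)$ flip counts $a_1+a_2$ and $2(2k(n-k)-n) - a_1 - a_2$, which sum to $2(2k(n-k)-n)$. Therefore at least one of them is at most $2k(n-k)-n$, bounding the distance between the vertices of $\sig{k}+\sig{k-1}$ corresponding to $\T_1, \T_2$ by $2k(n-k)-n$ as required.

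The main obstacle will be the directional refinement above. The existing proof of Lemma~\ref{lemma:potential} only established $|\Delta f| \le 1$; the upgrade to ``upward flips decrease $f$ monotonically'' relies crucially on the fact that every $d=2$ generic circuit has $|C^+|=1, |C^-|=2$, which pins down which of the two levels involved in any flip is ``above'' and therefore fixes the sign of the $f$-change. Once that sign is nailed down, the conclusion is a telescoping argument that exactly matches the lower bound.
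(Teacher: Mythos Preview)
Your proof is correct and takes a different route from the paper's. The paper lifts to $Z(n,3)$: a maximal chain in $B(n,2)$ corresponds (up to commutation) to a tiling $\T_{\lift}\in Z(n,3)$, flips at levels $k$ and $k-1$ correspond to the tiles meeting the $k$\ts{th} cross-section of $\T_{\lift}$, and the triangle-count formula $2p-q-2$ for a planar triangulation yields exactly $2k(n-k)-n$ such tiles. You instead stay in $B(n,2)$ and sharpen Lemma~\ref{lemma:potential} to a directional statement: every upward cover relation in $B(n,2)$ changes $f(\T)=|\T_k^+|-|\T_k^-|$ by exactly $-1$ if it is at level $k$ or $k-1$, and by $0$ otherwise, so the number of such flips in any maximal chain telescopes to $f(\tmin)-f(\tmax)=2k(n-k)-n$. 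Both arguments then feed into the same cycle-through-$\tmin$-and-$\tmax$ endgame via Lemma~\ref{lemma: max chain}. Your approach is more self-contained---no three-dimensional lift, and it unifies the upper and lower bounds through a single potential (the same idea with $|\T_k^+|$ alone also yields Proposition~\ref{prop:ubk})---while the paper's approach makes the geometry visible and reuses the cross-section picture for Proposition~\ref{prop:ubk}. One cosmetic remark: with your stated sign $C^+=\{j\}$, $C^-=\{i,k\}$, the direction in which the level-$l$ tile count drops is $\T_2\to\T_1$ in the notation of the explicit flip formula rather than $\T_1\to\T_2$; since you correctly identify ``upward'' by its effect on tile counts (going from $\tmin$ toward $\tmax$), this does not affect the argument.
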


Before we proceed with the proof of the above proposition, we note the following: looking at the cross-section of an element of $Z(n,3)$, we get a triangulated $n$-gon.  In this triangulation, we say that a triangle is \emph{white} if it coincides with the bottom cross-section of a tile, and \emph{black} if it coincides with the top cross-section of a tile (see Figure \ref{fig:blackwhite}).
\begin{figure}[h!]\label{fig:blackwhite}
    \centering
    \includegraphics[width=0.4\textwidth]{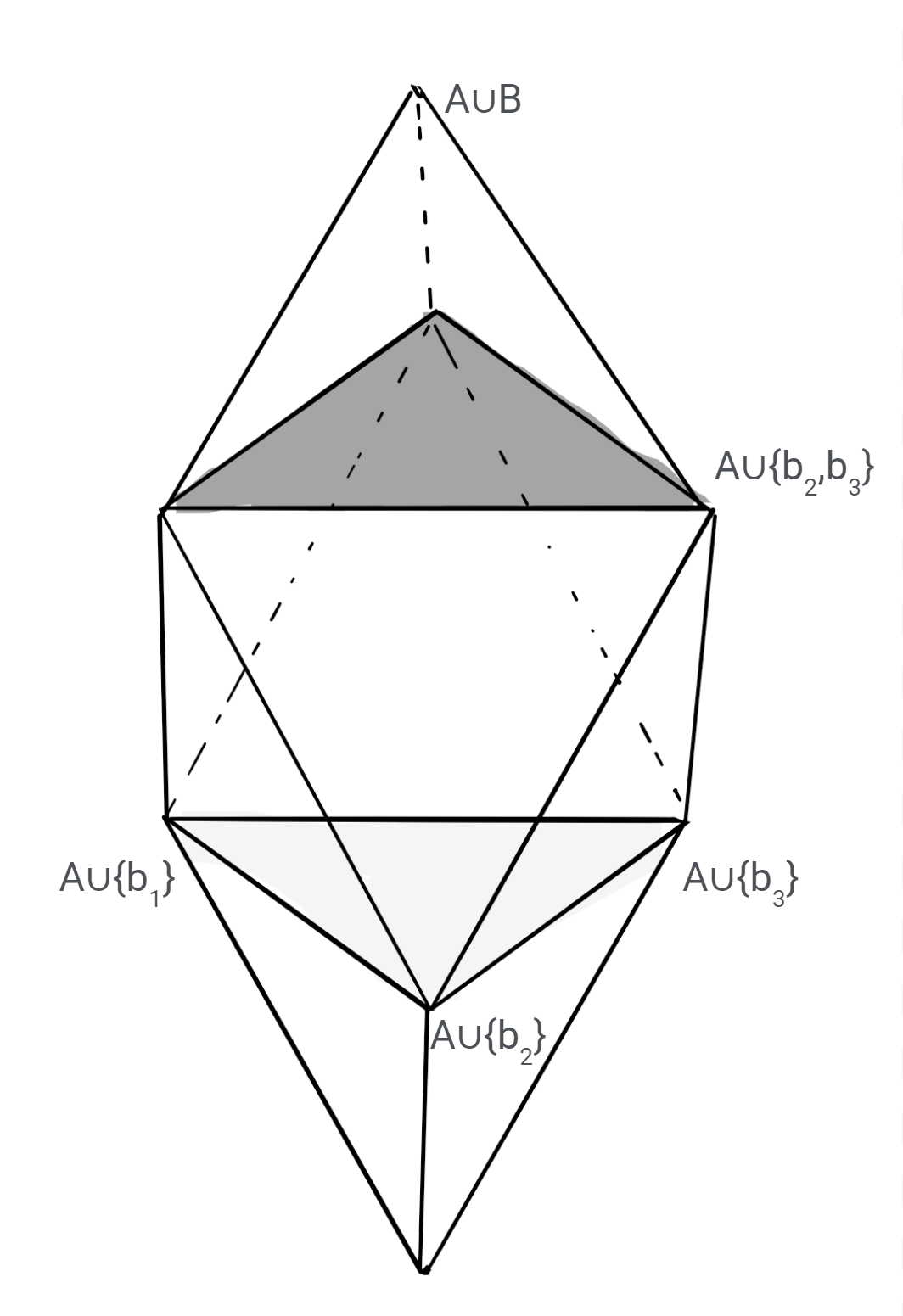}
    \caption{A tile $\Pi_{A,B}$, $B=\{b_1,b_2,b_3\}$, with the upper and lower cross-sections colored accordingly.}
    \label{fig:blackwhite}
\end{figure}

\begin{proof}
Consider a maximal chain in the Bruhat order and its commutation class.  Recall from Theorem \ref{thm: bruhat}  that this commutation class corresponds to an element of $Z(n,3)$, say $\T_{\lift}$.  A flip at level $k$ in this maximal chain corresponds to a tile $\Pi_{A,B}$ with $|A|=k-1$ in $\T_{\lift}$, and similarly a flip at level $k-1$ corresponds to a tile with $|A|=k-2$.  A tile with $|A|=k-1$ or $|A|=k-2$ corresponds to a white or black tile respectively in the $k\ts{th}$ cross-section of $\T_{\lift}$.  The $k\ts{th}$ cross-section of any tiling of the lifted zonotope has $k(n-k)+1$ vertices in total, $n$ of which are exterior vertices.  It is well known that the number of triangles in a triangulation of $p$ points in a plane, with $q$ interior points, is equal to $2p-q-2$, so there are $2k(n-k)-n$ black and white triangles.  Any regular tiling is contained in a maximal chain by Lemma \ref{lemma: max chain}, so given any two regular tilings $\T_{1}$ and $\T_{2}$, we can create a cycle of length $2\binom{n}{3}$ which passes through both tilings as well as $\tmin$ and $\tmax$ which shows that we can relate $\T_{1}$ and $\T_{2}$ by a sequence of flips with at most $2k(n-k)-n$ flips at levels $k$ and $k-1$.

%Combining with the result of Proposition \ref{prop:lbk+k-1}, we conclude that  $\delta (\widehat{\Sigma}_k+\widehat{\Sigma}_{k-1})=2k(n-k)-n$.
\end{proof}

\subsection{Lower bound for diameter of $\sig{k}$}

\begin{proposition}\label{prop:lbk}
For for all $k\in[n-d], \delta (\widehat{\Sigma}_k)\geq k(n-k-1).$ 
\end{proposition}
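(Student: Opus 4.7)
The plan is to adapt the potential-function strategy of Proposition~\ref{prop:lbk+k-1}, retaining only the ``upper half'' of the potential. Define $N_k(\T) := |\T^+_k|$, i.e.\ the number of tiles of $\T$ with $|A|\ge k$. A single flip at level $\ell$ replaces three tiles whose $|A|$-values lie in $\{\ell-1,\ell\}$ by three others with the same two $|A|$-values but with multiplicities swapped. This gives a clean trichotomy: if $\ell > k$, both affected $|A|$-values are $\ge k$, so $N_k$ is unchanged; if $\ell < k$, both are $< k$, so again $N_k$ is unchanged; and if $\ell = k$, only the tiles with $|A| = k$ contribute to $N_k$, so $N_k$ changes by exactly $\pm 1$.

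By this trichotomy, $N_k$ descends to a well-defined function on $k$-equivalence classes of regular tilings, hence to a function on the vertices of $\sig{k}$, and it changes by exactly $\pm 1$ along each edge of $\sig{k}$ (since those edges correspond to level-$k$ flips between representatives). By Lemma~\ref{lemma:maxminregularity} both $\tmin$ and $\tmax$ are regular, so the distance between their $k$-equivalence classes in the $1$-skeleton of $\sig{k}$ is at least $|N_k(\tmin)-N_k(\tmax)|$.

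It remains to compute this difference. The observation already recorded in Section~\ref{lbk+k-1} implies that $\tmin$ has $j+1$ tiles at level $|A|=j$ while $\tmax$ has $n-j-1$ tiles there, for $j\in\{0,\ldots,n-2\}$. Summing over $j\in\{k,k+1,\ldots,n-2\}$ gives
$$
N_k(\tmin)=\binom{n}{2}-\binom{k+1}{2},\qquad N_k(\tmax)=\binom{n-k}{2},
$$
and an elementary algebraic simplification yields $N_k(\tmin)-N_k(\tmax)=k(n-k-1)$, finishing the proof.

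The only real subtlety is a conceptual one: noticing that the ``$+$'' and ``$-$'' halves of the potential $P_{\T,k}$ in Definition~\ref{def:potential} in fact decouple, the ``$+$'' half being sensitive only to level-$k$ flips and blind to all others, while the ``$-$'' half tracks level-$(k-1)$ flips in the same way. Once this is identified, the case analysis is a three-way version of the six-case analysis in Lemma~\ref{lemma:potential} and is essentially immediate from the flip description in Proposition~2.5.
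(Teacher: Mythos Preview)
Your proof is correct and takes essentially the same approach as the paper's: isolate the ``upper half'' of the potential from Definition~\ref{def:potential}, observe it changes by exactly $\pm 1$ under level-$k$ flips and is invariant under all others, and evaluate it at $\tmin$ and $\tmax$. Your formulation via the tile count $N_k(\T)=|\T^+_k|$ is a mild streamlining of the paper's $\widetilde{P}_{\T,k}$ (indeed $\widetilde{P}_{\T,k}(\T')=N_k(\T)-N_k(\T')$ always), and it lets you handle all $k$ at once rather than invoking the symmetry $\sig{k}\shift -\sig{n-k-1}$ for $k>n/2$.
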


\begin{proof}
We start by noticing that in the definition of $P_{\T,k}$, considering the first two terms $|\T^+_k\setminus \T^{\prime +}_k|-|\T^{\prime +}_k\setminus \T^+_k|$ change after a flip only if the flip happens at level $k$. Similarly, $|\T^-_k\setminus \T^{\prime -}_k|+|\T^{\prime -}_k\setminus \T^-_k|$ only if the flip happens at level $k-1$. So, we can consider $|\T^+_k\setminus \T^{\prime +}_k|-|\T^{\prime +}_k\setminus \T^+_k|$ to lower bound the number of flips which occur at level $k$ between two tilings. We will call the modified potential $\widetilde{P}_{\T,k}(\T^{\prime})=|\T^+_k\setminus \T^{\prime +}_k|-|\T^{\prime +}_k\setminus \T^+_k|$. 

Fix $k\leq \frac{n}{2}$. We recall the definition of $(\ttmin)^+_k$ and $(\ttmax)^+_k$ from the proof of Proposition \ref{prop:lbk+k-1}. As before, $k\leq \frac{n}{2},(\ttmin)^+_k\cap (\ttmax)^+_k=\emptyset$. Thus, 

\begin{align*}
|\widetilde{P}_{(\tmin),k}(\tmax)-\widetilde{P}_{(\tmin),k}(\tmin)| &=|(\tmin)^{ +}_k\setminus (\tmax)^+_k|-|(\tmax)^+_k\setminus (\tmin)^{ +}_k| \\ &\hspace{5mm}-\Big(|(\tmin)^+_k\setminus (\tmin)^+_k|-|(\tmin)^+_k\setminus (\tmin)^+_k|\\
    &= |(\ttmin)^+_k|-|(\ttmax)^+_k|\\
    &= nk-\frac{k(k+1)}{2}-\frac{k(k+1)}{2}\\
    &= nk-k(k+1)
\end{align*}

When $k>\frac{n}{2},$ recall that $\widehat{\Sigma}_k \shift -\widehat{\Sigma}_{n-k-1}$. So, when $k>n/2, \delta(\widehat{\Sigma}_k)\geq n(n-k-1)-(n-k-1)(n-k)=k(n-k-1).$
\end{proof}

\begin{remark}
It is tempting to think that because $\tmin$ and $\tmax$ are opposites (meaning, all ${n\choose 3}$ of their circuts are oriented differently), an arbitrary pair of opposite tilings would also be distance $k(n-k-1)$ apart in $\sig{k}$. However, consider the two tilings for $n=5$ below. When we restrict to level $k=1$, we see that $\T_1$ and $\T_2$ are 1 only flip move apart in $\sig{1}$, instead of 3, as the lower bound of the diameter would suggest. 
\begin{figure}[h!]
    \centering
    \includegraphics[width=12cm]{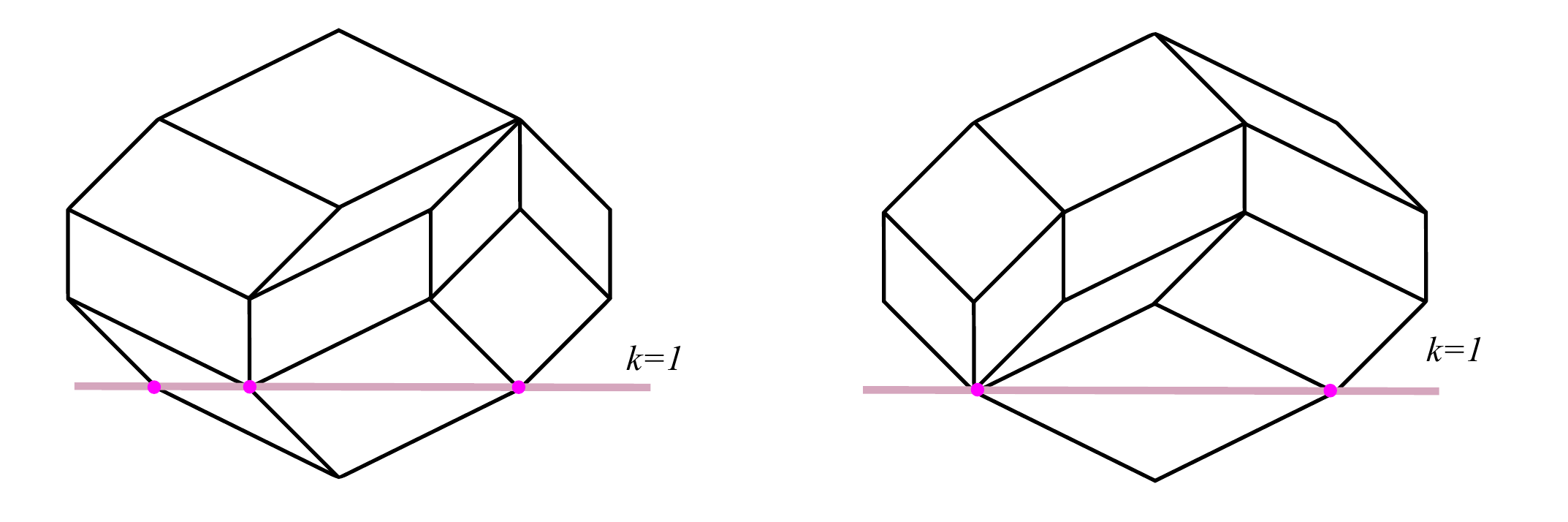}
    \caption{$\T_1$ and $\T_2$ are distance ${n\choose 3}$ apart in the flip graph of zonotopal tilings, but distance 1 apart in $\sig{1}$.}
    \label{fig:nasty}
\end{figure}

\end{remark}
\subsection{Upper bound for diameter of $\sig{k}$}

\begin{proposition}
\label{prop:ubk}
For all $k\in[n-d]$, $\delta (\sig{k})\le k(n-k-1)$.
\end{proposition}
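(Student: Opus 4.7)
The plan is to adapt the cycle argument from the proof of Proposition \ref{prop:ubk+k-1}, refining it by showing that every maximal chain from $\tmin$ to $\tmax$ in $B(n,2)$ contains \emph{exactly} $k(n-k-1)$ flips at level $k$, independently of the chain. This sharpens the $2k(n-k)-n$ count used before (which bundled levels $k$ and $k-1$) into a per-level statement.

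First, I would establish this level-wise invariance. As in the proof of Proposition \ref{prop:ubk+k-1}, a maximal chain corresponds via Theorem \ref{thm: bruhat} to a 3D tiling $\T_{\lift}\in Z(n,3)$, and its level-$\ell$ flips are in bijection with tiles $\Pi_{A,B}\in\T_{\lift}$ having $|A|=\ell-1$, i.e.\ with white triangles in the level-$\ell$ cross section of $\T_{\lift}$. Let $m_k$ denote the number of tiles of $\T_{\lift}$ with $|A|=k$. Since the level-$\ell$ cross section has $m_{\ell-1}$ white triangles and $m_{\ell-2}$ black triangles, and its total triangle count is $2\ell(n-\ell)-n$ by the vertex-count argument recalled in the proof of Proposition \ref{prop:ubk+k-1}, we obtain the recurrence
\begin{equation*}
m_{\ell-1}+m_{\ell-2}=2\ell(n-\ell)-n \text{ for } \ell\in[1,n-2], \qquad m_{-1}=0,
\end{equation*}
which by induction on $\ell$ forces $m_k=(k+1)(n-k-2)$ for all $k\in[0,n-3]$. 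In particular, every maximal chain from $\tmin$ to $\tmax$ contains exactly $m_{k-1}=k(n-k-1)$ flips at level $k$.

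With this invariance in hand, the cycle argument from the proof of Proposition \ref{prop:ubk+k-1} adapts verbatim. Given regular tilings $\T_1,\T_2$, Lemma \ref{lemma: max chain} furnishes maximal chains $M_1, M_2$ from $\tmin$ to $\tmax$ through $\T_1$ and $\T_2$, respectively; concatenating $M_1$ with the reverse of $M_2$ produces a closed walk of length $2\binom{n}{3}$ visiting $\T_1,\tmax,\T_2,\tmin$ and containing exactly $2k(n-k-1)$ level-$k$ flips in total. The two arcs from $\T_1$ to $\T_2$ partition these flips, so the shorter arc realizes a flip sequence from $\T_1$ to $\T_2$ using at most $k(n-k-1)$ level-$k$ flips, which is the required upper bound. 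The main obstacle is the first step: justifying that the total triangle count $2\ell(n-\ell)-n$ really is uniform across every tiling of $Z(n,3)$, which is the same input already invoked in Proposition \ref{prop:ubk+k-1} and depends on the cyclic-zonotope fact that each level-$\ell$ cross section has $\ell(n-\ell)+1$ total vertices with exactly $n$ of them on the boundary; once this is granted, the recurrence and the cycle argument are both routine.
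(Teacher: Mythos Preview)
Your proposal is correct and follows essentially the same approach as the paper: the paper also reduces to counting the white triangles $W(k)$ at the $k$\textsuperscript{th} cross section of $\T_{\lift}$, obtains the identical recurrence $W(k)+W(k-1)=2k(n-k)-n$ (your $m_{\ell-1}+m_{\ell-2}=2\ell(n-\ell)-n$ reindexed), and solves it to $W(k)=nk-k(k+1)=k(n-k-1)$, then invokes the cycle argument from Proposition~\ref{prop:ubk+k-1}. Your write-up is more explicit about the final cycle-splitting step, but the method is the same.
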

\begin{proof}
This proof is analogous to that of Proposition~\ref{prop:ubk+k-1}.  Now we are interested in the number of white triangles at the $k\ts{th}$ cross-section of $\T_{\lift}$.  Define this quantity to be $W(k)$. Since a black triangle at the $k^{th}$ cross-section corresponds to a white triangle at the $(k-1)\ts{th}$ cross-section, 

\begin{align*}
    W(k)&=2k(n-k)-n-W(k-1)\\
    &=\sum_{i-1}^k(-1)^{k+i}\left[2i(n-i)-n\right]\\
    &=nk-k(k+1).
\end{align*}

\end{proof}

\section{Diameters of flip graphs of lifting hypertriangulations and reduced lifting hypertriangulations for $d=2$}
\label{sec:hypertriangulations}

%\textcolor{green}{I've just noticed that the flip diameter for (not necessarily lifting) hypertriangulations is super easy to compute, why won't we add this here or in the background! it's just $k$, for $(k,n)$-monotone paths, when $k \le n/2$, and $n-k$ otherwise}

As mentioned in Section \ref{subsec:hyper}, lifting hypertriangulations have an additional requirement over hypertriangulations in that their underlying sets must be \emph{strongly separated}:

\begin{definition}
We say two sets $S_1$ and $S_2$ are \emph{strongly separated} if $\max(S_1\setminus S_2) < \min(S_2\setminus S_1)$ or $\max(S_2\setminus S_1)<\min( S_1\setminus S_2)$.  A collection of subsets is \emph{strongly separated} if the subsets are pairwise strongly separated.
\end{definition}

We can also define \emph{reduced lifting hypertriangulations}:

\begin{definition}
A \textit{reduced} lifting hypertriangulation at level $k$ is a lifting hypertriangulation at level $k$ such that for any three consecutive vertices $A_1,A_2,A_3$, $|A_1\cap A_2\cap A_3|=k-2$.
\end{definition}

As the word ``reduced'' suggests, one can throw away a few vertices from any lifting hypertriangulation to get the corresponding reduced lifting hypertriangulation.

\begin{figure}[h]
    \centering
    \includegraphics[width=0.3\textwidth]{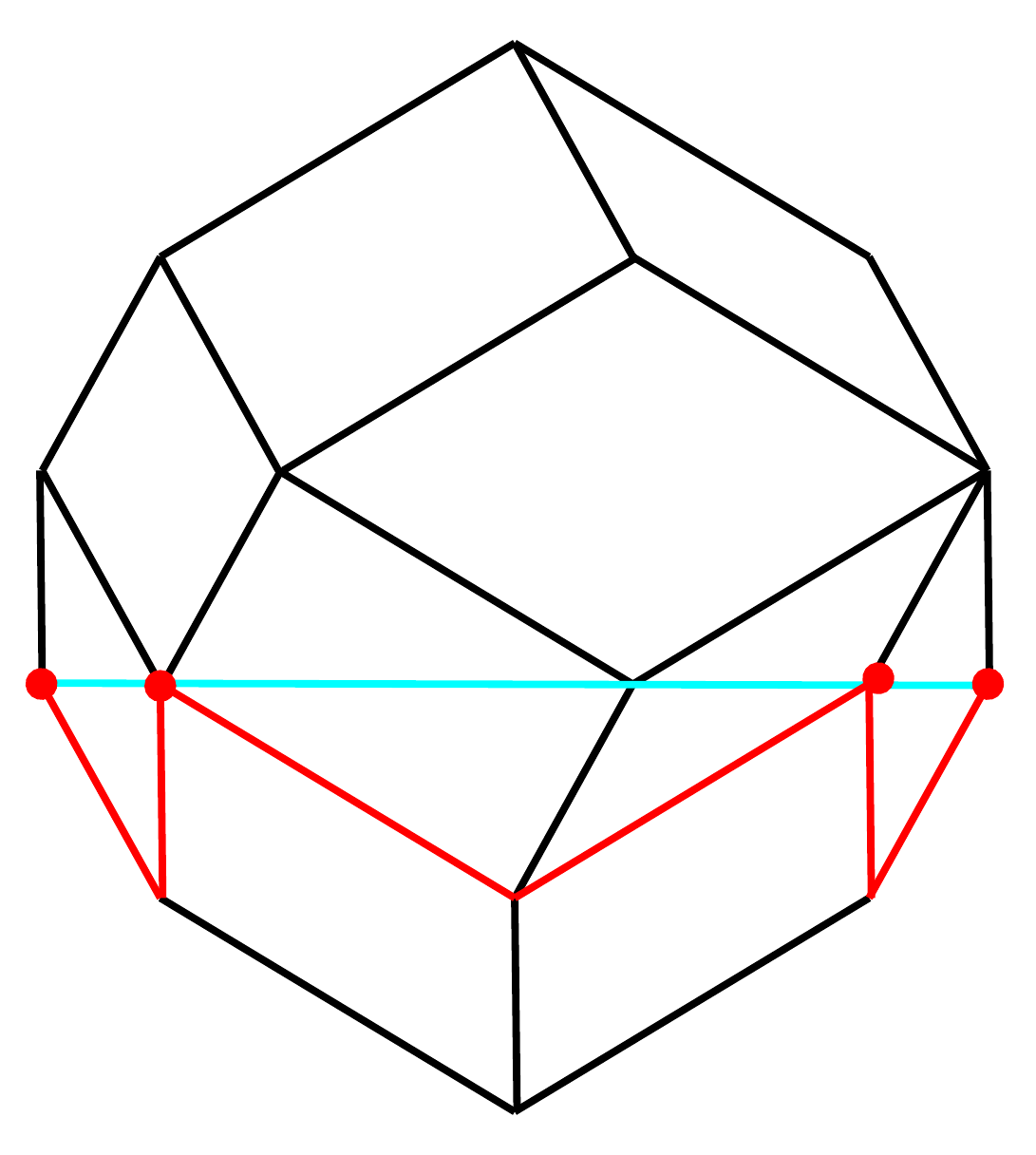}
    \caption{A tiling with of $\zono$ when $\A=\{-2,-1,0,1,2\}$.  Here the lifting triangulation at level 2 has vertices \{1,2\},\{1,3\},\{3,4\},\{3,5\},\{4,5\}, but the corresponding reduced hypertriangulation excludes vertex \{3,5\}. }
    \label{hyperflip}
\end{figure}

As mentioned in Section \ref{subsec:hyper}, flips in lifting hypertriangulations are those induced by flips in zonotopal tilings.  Say $M$ is a flip in a hypertriangulation corresponding to the $k\ts{th}$ cross-section of a tiling $\T$.  We say $M$ is an \emph{upper flip} if it is induced by a flip at level $k$ in $\T$.  Similarly, we say $M$ is a \emph{lower flip} if it is induced by a flip at level $k-1$ in $\T$.

In this section, we will show that lifting hypertriangulations and reduced lifting hypertriangulations, and the flips between them relate to equivalence classes of fine zonotopal tilings and flips between these equivalence classes. Using this relationship, we calculate the diameter of the flip graphs of lifting hypertriangulations and reduced lifting triangulations.

\subsection{Diameter of the flip graph of lifting hypertriangulations}

\begin{proposition}\label{prop:hyper}
Lifting hypertriangulations at level $k$ and the flips between them correspond to classes of simultaneous $k$- and $(k-1)$-equivalence and the flips between these classes, respectively.
\end{proposition}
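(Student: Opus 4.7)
The plan is to construct a bijection between lifting hypertriangulations at level $k$ and simultaneous $k$- and $(k-1)$-equivalence classes of fine zonotopal tilings of $\zono$, matching the flip structures on both sides. The bijection will be mediated by the \emph{belt} of level-$(k-1)$ tiles $\T^{(k-1)} := \{\Pi_{A,B} \in \T \mid |A| = k-1\}$.

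First, I would show that the lifting hypertriangulation at level $k$ of a tiling $\T$ depends only on $\T^{(k-1)}$. Since the face $F_{A,B}$ of $\cube^{\,n}$ has coordinate sum ranging over $[|A|,|A|+2]$, its intersection with the hyperplane $\{x_1 + \cdots + x_n = k\}$ is a $1$-dimensional segment exactly when $|A| = k-1$ and an isolated point when $|A| \in \{k-2, k\}$. Under $\pi$, each tile $\Pi_{A,\{b_1,b_2\}} \in \T^{(k-1)}$ yields the edge from $A \cup \{b_1\}$ to $A \cup \{b_2\}$ in the monotone path on $\triangle_{k,n}$. Because this path is a connected $1$-dimensional subdivision, its vertex set is exactly the set of endpoints of these edges, so the lifting hypertriangulation is fully encoded by $\T^{(k-1)}$.

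Second, I would establish that two tilings share the same $\T^{(k-1)}$ if and only if they are simultaneously $k$- and $(k-1)$-equivalent. The forward direction is immediate from the tile-exchange formula recalled in Section~2.3: a flip at level $\ell$ alters only tiles with $|A| \in \{\ell-1,\ell\}$, so flips avoiding levels $k-1$ and $k$ preserve $\T^{(k-1)}$. For the converse, when $d=2$ the common belt $\T^{(k-1)}$ divides $\zono$ into an upper region (tiled by parallelograms with $|A| \geq k$) and a lower region (tiled by parallelograms with $|A| \leq k-2$). Applying flip-connectivity of zonotopal tilings in the generic $d=2$ case~\cite{kenyon1993tiling} independently to each sub-region connects the upper (resp.\ lower) sub-tilings of $\T_1$ and $\T_2$ by flips at levels $\geq k+1$ (resp.\ $\leq k-2$), none of which occur at levels $k-1$ or $k$, yielding simultaneous equivalence.

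Finally, I would match the flip structures. A flip in a lifting hypertriangulation locally modifies the monotone path by inserting or removing a single vertex, which corresponds to a minimal change in $\T^{(k-1)}$. By the tile-exchange formula, such a minimal change is produced either by a single level-$k$ flip ($|A(F)| = k-1$) --- matching an upper flip --- or by a single level-$(k-1)$ flip ($|A(F)| = k-2$) --- matching a lower flip. Conversely, every level-$k$ or level-$(k-1)$ flip carries the class of one tiling to a neighboring class whose hypertriangulation differs by the corresponding upper or lower flip. The main obstacle will be the converse direction of the bijection above --- verifying that a shared belt $\T^{(k-1)}$ implies simultaneous equivalence --- which requires applying flip-connectivity to each sub-region cut out by the belt and checking that sub-region flips genuinely lift to zonotopal flips avoiding levels $k-1$ and $k$.
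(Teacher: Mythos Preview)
Your proposal is correct in outline but takes a genuinely different route from the paper. The paper does not decompose $\zono$ along the belt; instead it proves an auxiliary lemma (Lemma~\ref{lemma:path}) by downward induction on the level: any desired upper flip $M$ at level $k$ can be realized after a preparatory sequence of zonotopal flips that leaves every cross-section at level $\le k$ untouched, by recursively clearing obstructions one level up. Repeated use of this lemma (and its mirror for lower flips) lets one match the $(k+1)$\ts{th} hypertriangulation, then the $(k+2)$\ts{th}, and so on, without ever invoking flip-connectivity of any region other than the full zonotope.

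Your decomposition argument is cleaner and more geometric, but be aware that the step you flag as the main obstacle is slightly more delicate than you indicate. The upper and lower regions cut out by the belt are \emph{not} zonotopes (they are not centrally symmetric), so you cannot literally apply ``flip-connectivity of zonotopal tilings'' to them. What you need is flip-connectivity of parallelogram tilings of an arbitrary simply connected polygon; this is in fact the content of Kenyon's paper~\cite{kenyon1993tiling}, but the way the present paper cites that result (only for $\zono$ itself) obscures this. Once that stronger statement is in hand, your argument goes through: every hexagon flip inside the upper region involves three tiles with $|A|\ge k$, hence has level $\ge k+1$, and symmetrically below. The trade-off is that the paper's inductive lemma is entirely self-contained, whereas your approach outsources the work to the general-region version of Kenyon's theorem.
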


\begin{lemma}\label{lemma:path}
Say there is a possible upper flip $M$ in a lifting hypertriangulation coinciding with the $k\ts{th}$ cross-section of tiling $\T$ (this hypertriangular flip might not be available as a zonotopal flip in $\T$). Then there exists a finite sequence of zonotopal flips $(F_1,....,F_m)$ starting at $\T$ such that the $\ell\ts{th}$ lifting hypertriangulation is unchanged by the first $m-1$ flips for all $\ell\leq k$, but the upper flip $M$ occurs on the last zonotopal flip $F_m$.
\end{lemma}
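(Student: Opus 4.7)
The plan is to reduce the lemma to producing an intermediate tiling $\T'$ of $\zono$ satisfying two properties: (i) $\T'$ agrees with $\T$ on its lifting hypertriangulations at every level $\ell\le k$, and (ii) the specific zonotopal flip at level $k$ realizing $M$ is directly available in $\T'$. Once such $\T'$ is found, the flips $F_1,\ldots,F_{m-1}$ will form any flip-path from $\T$ to $\T'$ using only zonotopal flips of level $\ge k+1$, and $F_m$ will be the realizing flip itself. By Proposition~\ref{prop:hyper}, zonotopal flips at level $\ell\ge k+1$ preserve the simultaneous $\ell_0$- and $(\ell_0-1)$-equivalence class for every $\ell_0\le k$, so such flips leave every hypertriangulation at levels $\le k$ untouched, ensuring property~(i) is maintained along the whole path.

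Next I would identify the zonotopal data corresponding to $M$. Since $M$ is a legitimate upper hyperflip, it is realized by some circuit $C$ and a set $A(F)\subseteq[n]\setminus\underline{C}$ of size $k-1$, both determined by the local shape of the level-$k$ monotone path of $\T$ at the flip location. The $|A|=k-1$ tiles $\Pi_{A(F),\underline{C}\setminus\{j\}}$ for $j\in C^-$ are precisely those whose diagonals form the edges of the path at the flip site, so they are already present in $\T$. What may be missing are the $|A|=k$ tiles $\Pi_{A(F)\cup\{j\},\underline{C}\setminus\{j\}}$ for $j\in C^+$. To construct $\T'$ I would start from any tiling $\hat\T$ in which $M$ is directly realizable (such $\hat\T$ exists because $M$ is a valid hyperflip) and perform zonotopal flips on $\hat\T$ only at levels $\le k-2$ to align the lower hypertriangulations with those of $\T$; such flips touch only tiles with $|A|\le k-2$, and therefore preserve both the realizability of $M$ at level $k$ and the level-$k$ hypertriangulation.

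The main obstacle is showing that $\T$ and $\T'$ can actually be connected by zonotopal flips exclusively of level $\ge k+1$. Both tilings share all tiles of index $|A|\le k-1$ (since these tiles are determined by the hypertriangulations at levels $\le k$, which agree by construction), so they differ only in their upper structure. I would argue existence of this restricted flip-path via the higher Bruhat order: by Lemma~\ref{lemma: max chain} both $\T$ and $\T'$ extend to maximal chains in $B(n,2)$, hence to elements of $Z(n,3)$; by Theorem~\ref{thm: bruhat} the bottom $k$ layers of these $Z(n,3)$-elements encode exactly the hypertriangulations at levels $\le k$ and therefore coincide. The flips inside $Z(n,3)$ that reconcile the remaining upper structure correspond precisely to zonotopal flips of level $\ge k+1$ in $Z(n,2)$, yielding the required path.

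Concatenating this path with the single realizing flip $F_m$ produces the desired sequence $(F_1,\ldots,F_m)$. The principal technical difficulty is the ``restricted connectivity'' statement invoked above---that tilings agreeing on all their low-level hypertriangulations can be connected by high-level flips alone---which is strictly stronger than the pairwise equivalence furnished by Proposition~\ref{prop:hyper}. I expect this step will require either a careful induction on $k$ or a direct higher-Bruhat exchange lemma in $Z(n,3)$, and it is where the bulk of the work will reside.
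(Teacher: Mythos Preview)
Your proposal has two genuine problems. First, you invoke Proposition~\ref{prop:hyper} to justify that high-level flips preserve low-level hypertriangulations, but in the paper Proposition~\ref{prop:hyper} is \emph{proved using} Lemma~\ref{lemma:path}; you cannot cite it here. The fact you actually need---that a zonotopal flip at level $\ell$ touches only the $\ell$\ts{th} and $(\ell-1)$\ts{th} cross-sections---is elementary from the tile description of a flip, so this is easily repaired, but as written it is circular. You also appeal to Lemma~\ref{lemma: max chain}, which is stated only for regular tilings; the present lemma makes no regularity assumption.

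The more serious issue is structural. Your plan reduces the lemma to a ``restricted connectivity'' statement---that two tilings with identical hypertriangulations at levels $\le k$ are connected by flips at levels $\ge k+1$---and you concede this is ``where the bulk of the work will reside.'' But that statement is at least as hard as the lemma itself; you have reformulated the problem, not reduced it. Your sketch via $Z(n,3)$ does not close the gap either: two elements of $Z(n,3)$ sharing their bottom $k$ layers are not obviously connected by $Z(n,3)$-flips supported above those layers, and even if they were, $Z(n,3)$-flips are not the same thing as $Z(n,2)$-flips in a single tiling.

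The paper's argument avoids this detour entirely with a direct downward induction on $k$. The base case $k=n-3$ is trivial because the top cross-section is rigid. For the inductive step, if the desired level-$k$ flip is blocked in $\T$, it is because certain vertices at level $k{+}1$ obstruct it; each obstructing vertex can be removed by an upper flip at level $k{+}1$, and by the inductive hypothesis (applied at level $k{+}1$) such a flip can be realized after a sequence of flips at levels $\ge k{+}2$, none of which touches level $k$. Iterating clears all obstructions and makes $F_m$ available. This is exactly the ``careful induction on $k$'' you anticipate at the end, but carried out directly on the obstructions rather than through an auxiliary tiling $\T'$ and a global connectivity statement.
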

\begin{proof}
Say $k=n-3$.  There is only one possible position of tiles at the $(n-2)\ts{th}$ cross-section, so this case is trivially true.

We induct downwards.  Let $V_k(\T)$ be the set of vertices in the $k\ts{th}$ lifting hypertriangulation of $\T$.  Say there is a vertex $S\in V_k(\T_1)$ but  $S\notin V_k(\T_2)$.
 If there is a flip from $\T_1$ that removes $S$ from $V_k(\T_1)$ then perform this flip and we are done.  Otherwise, there is at least one vertex that should be removed from $V_{k+1}(\T_1)$ to make the zonotopal flip available; let $S'$ be one of those vertices.  Inductively, there exists a sequence of flips from $\T_1$ that will remove $S'$ where each flip (except the last) leaves the $\ell\ts{th}$ hypertriangulation the same for all $\ell\leq k+1$.  The last flip will also leave the $k\ts{th}$ cross-section the same, so after repeating this procedure for all obstructions $S'$, we can perform the flip $F_m$ in our tiling which induces the flip $M$ in the $k\ts{th}$ cross-section.

\end{proof}

\begin{proof}[Proof of Proposition~\ref{prop:hyper}]
Say tilings $\T_1$ and $\T_2$ are $k$- and $(k-1)$-equivalent.  Then $(\T_1)^+_k=(\T_2)^+_k$ and $(\T_1)^-_k=(\T_2)^-_k$, as defined in Definition \ref{def:tiling_orientation}.  Then we can completely reconstruct the set of $k\ts{th}$ cross-section of the tilings, so their lifting hypertriangulation at level $k$ must be the same.

Now assume the lifting hypertriangulation at level $k$ is the same for $\T_1$ and $\T_2$.  It suffices to show that we can use a series of flips not at level $k$ or $k-1$ to connect $\T_1$ to another tiling $\T_1'$ such that the $(k+1)\ts{th}$ lifting hypertriangulation of $\T_1'$ matches that of $\T_2$; by symmetry, we can also match the $(k-1)\ts{th}$ lifting hypertriangulations, and then we can proceed inductively so that we get a sequence of flips connecting $\T_1$ and $\T_2$.   This is true by Lemma \ref{lemma:path}.

The statement that the flips between these lifting hypertriangulations correspond to flips at levels $k$ and $(k-1)$ is now easy to check because one of these flips necessarily adds or removes a vertex to the hypertriangulation:

\begin{figure}[h]
    \centering
    \includegraphics[width=0.3\textwidth]{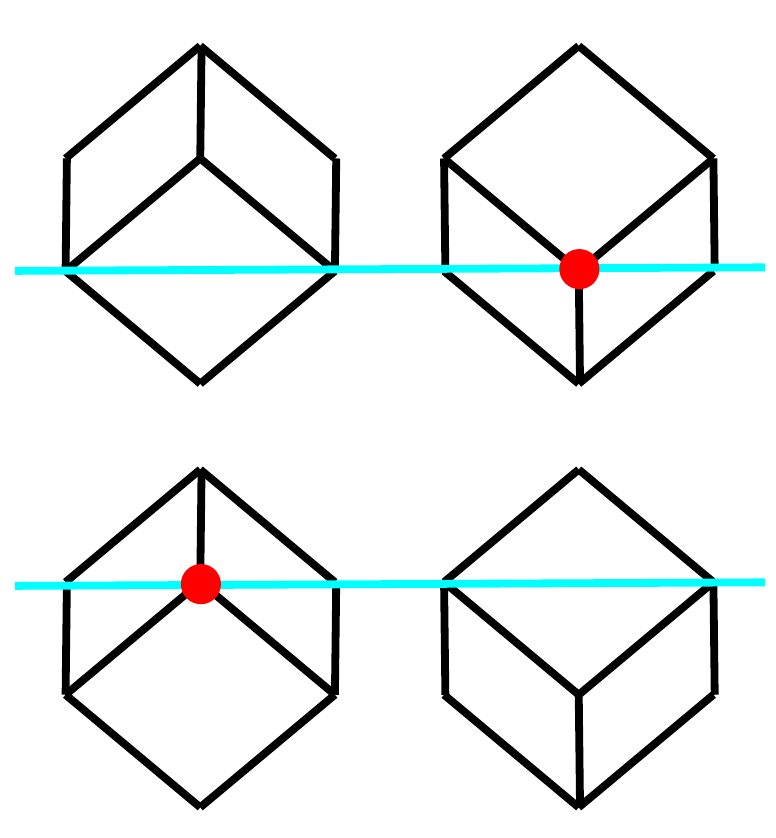}
    \caption{An \emph{upper} flip at level $k$ (top) and \emph{lower} flip at level $k-1$ (bottom).  The vertex that is introduced to or removed from the $k\ts{th}$ lifting hypertriangulation in each case is highlighted in red}
\end{figure}
\end{proof}

The problem of finding the flip diameter of these hypertriangulations is now reduced to finding the least number of total flips at level $k$ and $k-1$ required to connect any two fine zonotopal tilings (not necessarily regular). The lower bound from Proposition \ref{prop:lbk+k-1} clearly still holds, but here we slightly modify the proof of the upper bound to account for possible irregular tilings.
\begin{theorem}
\label{thm:up-fgk+k-1}
The diameter of the flip graph for lifting hypertriangulations at level $k$ is $2k(n-k)-n$.
\end{theorem}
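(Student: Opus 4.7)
The plan is to prove that the lifting-hypertriangulation flip diameter at level $k$ equals $2k(n-k)-n$ by establishing matching lower and upper bounds. By Proposition~\ref{prop:hyper}, the hypertriangulation distance equals the minimum number of flips at levels $k$ or $k-1$ needed to connect \emph{any} pair of fine zonotopal tilings (not necessarily regular) representing those hypertriangulations, so both bounds must be pushed beyond the regular-only arguments of Section~\ref{sect:diameter}.

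For the lower bound, I would observe that the potential argument of Section~\ref{lbk+k-1} makes no regularity hypothesis. Lemma~\ref{lemma:potential} is a purely local case analysis of how $P_{\T,k}$ changes across a single flip and applies to every pair of tilings related by a flip. Combined with the computation $|P_{\tmin,k}(\tmax)-P_{\tmin,k}(\tmin)| = 2k(n-k)-n$ from Proposition~\ref{prop:lbk+k-1}, and the fact recalled in Section~\ref{sect:fine_tilings} that for generic point configurations all fine zonotopal tilings (regular or not) are flip-connected, this shows that every flip sequence from $\tmin$ to $\tmax$ uses at least $2k(n-k)-n$ flips at levels $k$ or $k-1$. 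Hence the hypertriangulations at level $k$ of $\tmin$ and $\tmax$ are already $2k(n-k)-n$ apart in the flip graph.

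For the upper bound, I would mimic the cycle-through-$\tmin$-and-$\tmax$ strategy of Proposition~\ref{prop:ubk+k-1}, but first upgrade Lemma~\ref{lemma: max chain} to all (possibly irregular) tilings. This is where the straight-line height-vector trick fails, since it produces only regular paths. Instead, I would invoke Ziegler's theorem (Theorem~\ref{thm: bruhat}): the flip graph of $Z(n,2)$ is the Hasse diagram of the graded poset $B(n,2)$ of rank $\binom{n}{3}$, with unique extremes $\tmin$ and $\tmax$. Gradedness immediately implies every tiling lies on a maximal chain of length $\binom{n}{3}$ from $\tmin$ to $\tmax$. Given two arbitrary tilings $\T_1, \T_2$, I would then concatenate two such chains (one through each $\T_i$) into a cycle of length $2\binom{n}{3}$ through $\T_1, \T_2, \tmin, \tmax$. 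Theorem~\ref{thm: bruhat}(iii)--(iv) identifies each chain with an element of $Z(n,3)$ and its flips at levels $k$ or $k-1$ with the white and black triangles of the $k$-th cross-section; the planar triangulation count produces $2k(n-k)-n$ such flips per chain. Splitting the cycle at $\T_1, \T_2$ and picking the shorter arc yields a sequence connecting them with at most $2k(n-k)-n$ relevant flips.

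The main obstacle is precisely the generalization of Lemma~\ref{lemma: max chain} to irregular tilings; everything else is a transparent adaptation of Section~\ref{sect:diameter}. Bypassing the straight-line-in-height-space argument forces reliance on the global graded-poset structure of $B(n,2)$ supplied by Ziegler's theorem, rather than on any explicit height-vector construction. Once that step is accepted, the triangle-counting bookkeeping from Proposition~\ref{prop:ubk+k-1} and the potential bookkeeping from Proposition~\ref{prop:lbk+k-1} each transfer with essentially no change.
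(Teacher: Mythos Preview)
Your proposal is correct and follows essentially the same approach as the paper: the paper's own proof says it is ``completely analogous to that of Theorem~\ref{thm:diam_k+k-1}'' with the single modification that ``any tiling is contained in a maximal chain in $Z(n,2)$ which may or may not contain irregular tilings,'' and your explicit invocation of the gradedness of $B(n,2)$ from Theorem~\ref{thm: bruhat} is exactly the justification underlying that sentence. Your observation that the potential argument of Lemma~\ref{lemma:potential} and Proposition~\ref{prop:lbk+k-1} is local and hence regularity-free matches the paper's remark that ``the lower bound from Proposition~\ref{prop:lbk+k-1} clearly still holds.''
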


\begin{proof}
The proof of Theorem~\ref{thm:up-fgk+k-1} is completely analogous to that of Theorem~\ref{thm:diam_k+k-1}.  The only difference is that any tiling is contained in a maximal chain in $Z(n,2)$ which may or may not contain irregular tilings.
\end{proof}

\subsection{Diameter of the flip graph of reduced lifting hypertriangulations}\label{subsec:reduced}

\begin{proposition}
The reduced lifting hypertriangulations at level $k+1$ and the flips between them correspond to $k$-equivalence classes of fine zonotopal tilings and the flips between them respectively.
\end{proposition}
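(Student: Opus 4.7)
My plan is to mirror the proof of Proposition 4.1 at one level higher, exploiting the observation that reducing a lifting hypertriangulation at level $k+1$ precisely factors out the effect of level-$(k+1)$ zonotopal flips. I will establish both directions of the claimed correspondence between $k$-equivalence classes and reduced lifting hypertriangulations.

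For the forward direction, I would show that any single flip between $k$-equivalent tilings preserves the reduced lifting hypertriangulation at level $k+1$. A flip at a level $j \notin \{k, k+1\}$ preserves the full lifting hypertriangulation at level $k+1$ already, by Proposition 4.1 applied at level $k+1$. A flip at level $k+1$ along a circuit $\{p, q, r\}$ (with $a_p < a_r < a_q$) and base region $A(F)$ of size $k$ changes three tiles as given by Proposition 2.9; taking the cross-section at height $k+1$ shows that it inserts or deletes exactly the vertex $A(F) \cup \{r\}$ between its neighbors $A(F) \cup \{p\}$ and $A(F) \cup \{q\}$. Since $|(A(F) \cup \{p\}) \cap (A(F) \cup \{r\}) \cap (A(F) \cup \{q\})| = |A(F)| = k = (k+1) - 1$, the changed vertex is reducible by definition, so the reduced hypertriangulation is unchanged.

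For the reverse direction, suppose $\T_1$ and $\T_2$ have the same reduced lifting hypertriangulation at level $k+1$. Each reducible vertex appearing in the lifting hypertriangulation of $\T_1$ but not in the common reduced one constitutes a possible upper flip at level $k+1$, so Lemma 4.3 applied at level $k+1$ produces a sequence of zonotopal flips from $\T_1$ executing it. The preliminary flips preserve all lifting hypertriangulations at levels $\le k+1$, which forces them to lie at levels $\ge k+2$ (since a flip at level $j$ structurally affects hypertriangulations only at levels $j$ and $j+1$), and the final flip is the level-$(k+1)$ flip carrying out the upper flip; none of these is at level $k$. Iterating removes all reducible vertices — the process terminates since each step strictly decreases the vertex count in the lifting hypertriangulation — producing a tiling $\T_1^*$ whose lifting hypertriangulation equals the common reduced one. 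Constructing $\T_2^*$ similarly and applying Proposition 4.1 supplies a path between $\T_1^*$ and $\T_2^*$ using flips at levels $\notin \{k, k+1\}$; concatenating, $\T_1$ and $\T_2$ are $k$-equivalent.

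For the flip correspondence, a parallel Proposition 2.9 computation shows that a level-$k$ zonotopal flip (with $|A(F)| = k-1$) alters the level-$(k+1)$ hypertriangulation by inserting or removing the vertex $A(F) \cup \{p, q\}$ between $A(F) \cup \{p, r\}$ and $A(F) \cup \{q, r\}$, with triple intersection $A(F)$ of size $k - 1 = (k+1) - 2$, making the vertex non-reducible so that the change survives reduction as a genuine flip. The main technical obstacle is the iterative application of Lemma 4.3 in the reverse direction: the delicate point is reading off from the lemma's preservation hypothesis ``unchanged at all $\ell \le k+1$'' the constraint that preliminary flips lie strictly above level $k+1$, so that the entire cascade of flips avoids level $k$.
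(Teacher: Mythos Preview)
Your proposal is correct and follows essentially the same approach as the paper, which simply declares the correspondence ``completely analogous to the proof of Prop.~\ref{prop:hyper}'' and then verifies the flip statement by inspection. You have unpacked that analogy explicitly: invoking Proposition~\ref{prop:hyper} at level $k+1$ to handle flips away from levels $k,k+1$, analyzing level-$(k+1)$ flips directly via Proposition~2.9 to see they touch only reducible vertices, and using Lemma~\ref{lemma:path} (at level $k+1$) together with Proposition~\ref{prop:hyper} to build the $k$-equivalence in the reverse direction. One small point both you and the paper leave implicit is that inserting or deleting a reducible vertex does not alter the reducibility status of any other vertex in the path, so that ``the reduced hypertriangulation'' is well-defined; this is a short case check on the neighbors $A(F)\cup\{p\}$ and $A(F)\cup\{q\}$ and their outer neighbors.
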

\begin{proof}
The proof that the reduced lifting hypertriangulations at level $k+1$ correspond to $k$-equivalence classes of tilings is completely analagous to the proof of Prop. \ref{prop:hyper}.  

Then it remains to check that a flip at level $k$ introduces or deletes a point in the reduced hypertriangulation at level $k+1$.  Referring to the bottom row of Figure \ref{hyperflip}, we see that a flip at level $k$ would introduce/remove at least one vertex to the lifting hypertriangulation and that this vertex would appear in the reduced lifting hypertriangulation (note the reduced hypertriangulation is not affected by an upper flip at level $k+1$).
\end{proof}

\begin{theorem}
\label{thm:ub-fgk}
The diameter of the flip graph for reduced lifting triangulations at level $k+1$ is $k(n-k-1)$.
\end{theorem}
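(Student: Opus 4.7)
The plan is to adapt the proof of Theorem~\ref{thm:diam_k} (which establishes $\delta(\sig{k}) = k(n-k-1)$) to the full flip graph of $k$-equivalence classes. By the preceding proposition, this graph coincides with the flip graph of reduced lifting hypertriangulations at level $k+1$, and its edges correspond precisely to level-$k$ flips between fine zonotopal tilings in distinct $k$-equivalence classes. The only substantive change from the regular setting is that arbitrary (possibly irregular) tilings and maximal chains must be allowed; both halves of the previous argument adapt essentially verbatim.

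For the lower bound, I would reuse the modified potential $\widetilde{P}_{\T,k}$ introduced in the proof of Proposition~\ref{prop:lbk}. The key fact, that a single zonotopal flip changes $\widetilde{P}_{\T,k}$ by at most $1$, is purely combinatorial: it is the level-$k$ restriction of the case analysis of Lemma~\ref{lemma:potential}, which inspects only the local structure of a flip and uses no regularity hypothesis. Applying this to $\T = \tmin$ and noting that $\tmin, \tmax$ are regular by Lemma~\ref{lemma:maxminregularity}, the computation in the proof of Proposition~\ref{prop:lbk} yields $\widetilde{P}_{\tmin,k}(\tmax) - \widetilde{P}_{\tmin,k}(\tmin) = k(n-k-1)$, so any sequence of zonotopal flips from $\tmin$ to $\tmax$, whether passing through regular tilings or not, contains at least $k(n-k-1)$ flips at level $k$. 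This forces the distance between the $k$-equivalence classes of $\tmin$ and $\tmax$ to be at least $k(n-k-1)$.

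For the upper bound, I would transplant the argument of Proposition~\ref{prop:ubk}. The only appearance of regularity there is in Lemma~\ref{lemma: max chain}, which produces a maximal chain of \emph{regular} tilings containing a given regular tiling via interpolation of height vectors. In our setting we can discard regularity entirely: by Theorem~\ref{thm: bruhat}(1), $B(n,2)$ is graded with unique extrema $\tmin$ and $\tmax$, so any fine zonotopal tiling $\T$ lies on some (possibly irregular) maximal chain of length $\binom{n}{3}$ from $\tmin$ to $\tmax$. The commutation class of such a chain corresponds to an element $\T_{\lift} \in Z(n,3)$, which may be irregular but this is irrelevant to the count; the same recurrence $W(k) = 2k(n-k) - n - W(k-1)$ shows that exactly $W(k) = k(n-k-1)$ of its flips occur at level $k$. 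Given two tilings $\T_1, \T_2$, concatenating their chains yields a cycle of length $2\binom{n}{3}$ through $\tmin, \T_1, \tmax, \T_2$ containing $2k(n-k-1)$ total level-$k$ flips, and the shorter of the two halves between $\T_1$ and $\T_2$ uses at most $k(n-k-1)$ level-$k$ flips, matching the lower bound.

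The main subtlety is ensuring the higher Bruhat machinery applies to irregular tilings in the present setting, since Theorem~\ref{thm: bruhat} is stated for cyclic configurations. This is automatic: because $d=2$ and $\A$ is generic, the lifted configuration $\V = \{(a_i,1)\}$ is cyclic in the sense of Section 2.5, so every fine zonotopal tiling of $\zono$ belongs to $Z(n,2)$ and the full flip graph is the Hasse diagram of $B(n,2)$ by Theorem~\ref{thm: bruhat}(2). With this observation in place, the substitution ``regular $\to$ arbitrary'' is harmless throughout and the proof goes through exactly as outlined.
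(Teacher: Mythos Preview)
Your proposal is correct and follows essentially the same approach as the paper: both argue that the proof of Theorem~\ref{thm:diam_k} carries over once regular tilings are replaced by arbitrary tilings, with the lower bound inherited from Proposition~\ref{prop:lbk} (whose potential argument is purely combinatorial) and the upper bound from Proposition~\ref{prop:ubk} via maximal chains in $B(n,2)$ that may now include irregular tilings. Your explicit remark that the generic $d=2$ configuration $\mathcal{V}=\{(a_i,1)\}$ is automatically cyclic, so Theorem~\ref{thm: bruhat} applies to \emph{all} fine tilings, is a useful clarification that the paper leaves implicit.
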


\begin{proof}
The proof of Theorem \ref{thm:ub-fgk} is completely analogous to that of Theorem \ref{thm:diam_k}.  The only difference is that any tiling is contained in a maximal chain in $Z(n,2)$ which may or may not contain irregular tilings.  The lower bound is maintained from Proposition~\ref{prop:lbk} since the graph of all---regular and irregular---tilings for is flip connected in this case.
\end{proof}

\bibliographystyle{alpha}
\bibliography{bibliography}

\begin{thebibliography}{GPW19}

\bibitem[BS92]{billera1992fiber}
Louis~J Billera and Bernd Sturmfels.
\newblock Fiber polytopes.
\newblock {\em Annals of Mathematics}, 135(3):527--549, 1992.

\bibitem[BW18]{balitskiy2018flips}
Alexey Balitskiy and Julian Wellman.
\newblock Flips in reduced plabic graphs.
\newblock {\em MIT SPUR Final Project}, 2018.
\newblock
  \url{https://math.mit.edu/research/undergraduate/spur/documents/2018Wellman.pdf}.

\bibitem[GKZ94]{gelfand1994discriminants}
Israel~M Gelfand, Mikhail~M Kapranov, and Andrei~V Zelevinsky.
\newblock A-discriminants.
\newblock In {\em Discriminants, Resultants, and Multidimensional
  Determinants}, pages 271--296. Springer, 1994.

\bibitem[GP17]{galashin2017purity}
Pavel Galashin and Alexander Postnikov.
\newblock Purity and separation for oriented matroids.
\newblock {\em arXiv preprint arXiv:1708.01329}, 2017.

\bibitem[GPW19]{galashin2019higher}
Pavel Galashin, Alexander Postnikov, and Lauren Williams.
\newblock Higher secondary polytopes and regular plabic graphs.
\newblock {\em arXiv preprint arXiv:1909.05435}, 2019.

\bibitem[Ken93]{kenyon1993tiling}
Richard Kenyon.
\newblock Tiling a polygon with parallelograms.
\newblock {\em Algorithmica}, 9(4):382--397, 1993.

\bibitem[OS19]{olarte2019hypersimplicial}
Jorge~Alberto Olarte and Francisco Santos.
\newblock Hypersimplicial subdivisions.
\newblock {\em arXiv preprint arXiv:1906.05764}, 2019.

\bibitem[Pos06]{postnikov2006total}
Alexander Postnikov.
\newblock Total positivity, grassmannians, and networks.
\newblock {\em arXiv preprint math/0609764}, 2006.

\bibitem[Pos18]{postnikov2018positive}
Alexander Postnikov.
\newblock Positive grassmannian and polyhedral subdivisions.
\newblock {\em arXiv preprint arXiv:1806.05307}, 2018.

\bibitem[Pou14]{pournin2014diameter}
Lionel Pournin.
\newblock The diameter of associahedra.
\newblock {\em Advances in Mathematics}, 259:13--42, 2014.

\bibitem[PR03]{pfeifle2003computing}
Julian Pfeifle and J{\"o}rg Rambau.
\newblock Computing triangulations using oriented matroids.
\newblock In {\em Algebra, Geometry and Software Systems}, pages 49--75.
  Springer, 2003.

\bibitem[Sta63]{stasheff1963homotopy}
James~Dillon Stasheff.
\newblock Homotopy associativity of h-spaces, i.
\newblock {\em Transactions of the American Mathematical Society},
  108(2):275--292, 1963.

\bibitem[STT88]{sleator1988rotation}
Daniel~D Sleator, Robert~E Tarjan, and William~P Thurston.
\newblock Rotation distance, triangulations, and hyperbolic geometry.
\newblock {\em Journal of the American Mathematical Society}, 1(3):647--681,
  1988.

\bibitem[Zie91]{ziegler1991higher}
G{\"u}nter~M Ziegler.
\newblock Higher bruhat orders and cyclic hyperplane arrangements.
\newblock 1991.

\bibitem[Zie12]{ziegler2012lectures}
G{\"u}nter~M Ziegler.
\newblock {\em Lectures on polytopes}, volume 152.
\newblock Springer Science \& Business Media, 2012.

\end{thebibliography}
\iffalse

Department of Mathematics, Massachusetts Instittute of Technology, Cambridge MA\\
{\bf\it Email Address}, E. Bullock: \href{mailto:me@example.com}{edb22@mit.edu}\\
\emph{Email Address,}  K. Gravel: \href{mailto:me@example.com}{kgravel@mit.edu}
\fi

\end{document}